\def\thetitle{{ . }}
\newtheorem{thm}{Theorem}[section]
\newtheorem*{thm*}{Theorem}
\newtheorem{lem}[thm]{Lemma}
\newtheorem{cor}[thm]{Corollary}
\newtheorem{prop}[thm]{Proposition}
\newtheorem{que}{Question}
\newtheorem{defn}[thm]{Definition}
\theoremstyle{remark}
\newtheorem*{rmk}{\textbf{Remark}}
\theoremstyle{definition}
\newtheorem*{defn*}{Definition}
\newcommand\N{\mathbb{N}}
\newcommand\R{\mathbb{R}}
\newcommand\Mod{\operatorname{Mod}}
\newcommand\vol{\operatorname{vol}}
\newcommand\PSL{\operatorname{PSL}}
\title[Topological entropy and homology of mapping tori]{Topological entropy of pseudo-Anosov maps on punctured surfaces vs. homology of mapping tori}
\author{Hyungryul Baik}
\address{Department of Mathematical Sciences, KAIST,  
	291 Daehak-ro, Yuseong-gu, Daejeon 34141, South Korea }
\email{hrbaik@kaist.ac.kr}
\author{Juhun Baik}
\address{Department of Mathematical Sciences, KAIST,  
	291 Daehak-ro, Yuseong-gu, Daejeon 34141, South Korea }
\email{jhbaik@kaist.ac.kr}
\author{Changsub Kim}
\address{Department of Mathematical Sciences, KAIST,  
	291 Daehak-ro, Yuseong-gu, Daejeon 34141, South Korea }
\email{kcs55505@kaist.ac.kr}
\author{Philippe Tranchida}
\address{Department of Mathematical Sciences, KAIST,  
	291 Daehak-ro, Yuseong-gu, Daejeon 34141, South Korea }
\email{ptranchi@kaist.ac.kr}
\date{\today}
\begin{document}
\maketitle

\begin{abstract}
	We investigate the relation between the topological entropy of pseudo-Anosov maps on surfaces with punctures and the rank of the first homology of their mapping tori. On the surface $S$ of genus $g$ with $n$ punctures, we show that the entropy of a pseudo-Anosov map is bounded from above by $\dfrac{(k+1)\log(k+3)}{|\chi(S)|}$ up to a constant multiple when the rank of the first homology of the mapping torus is $k+1$ and $k, g, n$ satisfy a certain assumption. This is a partial generalization of precedent works of Tsai and Agol-Leininger-Margalit. 
\end{abstract}


\section{Introduction}\label{sec:intro}

Let $S$ be a connected orientable surface of finite type and $\Mod(S)$ be its mapping class group. 
Denote by $S_{g,n}$ a surface of genus $g$ with $n$ punctures. 
It is well-known that when $f$ is a pseudo-Anosov mapping class, the pseudo-Anosov map representative minimizes the topological entropy among all other representatives of the isotopy class $f$. For this reason, we do not distinguish a pseudo-Anosov mapping class and its pseudo-Anosov representative, and there is no ambiguity to discuss their topological entropy. Also, we simply talk about entropy, since it always mean topological entropy throughout the paper.   	
	
	For a pseudo-Anosov $f \in \Mod(S_{g,n})$, let 
	\begin{enumerate}
		\item $\kappa(f)$ denote the dimension of the subspace of $H_1(S_{g,n}; \mathbb{R})$ fixed by $f$, and
		\item $h(f)$ denote the entropy of $f$.
	\end{enumerate}

We remark that $\kappa(f)$ is an integer between $0$ and $2g+n$ and that $h(f)$ is equal to the logarithm of the stretch factor $\lambda(f)$ of $f$. For more details on the relation between $h(f)$ and $\lambda(f)$, we refer to \cite{FLP}.
Somewhat surprisingly, $\kappa(f)$ and $h(f)$ are related in an interesting way. To see this more explicitly, let $L(k,g)$ be the $\inf\{h(f) \mid f\colon S_g \to S_g$ is pseudo-Anosov and $\kappa(f) \geq k\}$. Then Agol-Leininger-Margalit \cite{agol2016pseudo} showed that 
\[
    L(k, g) \asymp \dfrac{k+1}{g} (\asymp \dfrac{k+1}{|\chi(S_g)|}), 
\]
where the symbol $\asymp$ means that the ratio between the left-hand side and the right-hand side is uniformly bounded from above and below. It was known earlier that $L(0, g) \asymp 1/g$ by Penner \cite{penner1991bounds} and $L(2g, g) \asymp 1$ by Farb-Leininger-Margalit \cite{farb2008lower}, and the result of Agol-Leininger-Margalit interpolates this two extreme cases and shows a mysterious connection between the dimension of the invariant homology and minimal entropy (in particiular answering a question of Ellenberg \cite{ellenberg2010pseudo}). 

One can ask whether we can generalize the result of Agol-Leininger-Margalit to punctured surfaces. 
For a pseudo-Anosov map $f \in \Mod(S_{g,n})$, the quantities $\kappa(f)$ and $h(f)$ are defined similarly. Analogous to $L(k, g)$, we define $L(k,g,n)$ to be the $\inf\{h(f) \mid f\colon S_{g,n} \to S_{g,n}$ is pseudo-Anosov and $\kappa(f) \geq k\}$.

There have been previous work in the setting of punctured surfaces in the case where $ k=0$. For example, 
Valdivia \cite{valdivia2012sequences} showed that when $g=rn$ for some fixed positive rational $r$, $L(0,g,n)\asymp\dfrac{1}{|\chi(S_{g, n})|}$ .
For fixed $g \geq 2$ Tsai showed that $L(0,g,n)\asymp\dfrac{\log n}{n}$ \cite{tsai2009asymptotic}.
Yazdi \cite{yazdi2020punture} showed that there exists a constant $C$ such that if $g>Cn\log^2 (n)$, then $L(0,g,n)\asymp\dfrac{1}{g}$. In the same article it is also proven that for fixed $n$,we have that  $L(0,g,n)\asymp\dfrac{1}{g}$.
Cooper-Tillmann-Worden \cite{cooper2021thurston} showed that for any fixed $0<\epsilon<1$, for all $g$ and $n$ such that $ \epsilon g+2\leq n \leq \frac{1}{\epsilon}g+2$, we have that $L(0,g,n)\lesssim \dfrac{1}{|\chi(S_{g, n})|}$.

Our main result is the following 
	\begin{thm*} \label{thm:main_thm_intro}
		For $4g-4 \leq k+1 \leq n \leq 2k-4g+6$, we have
		\[
		\dfrac{k+1}{|\chi(S_{g, n})|} \lesssim L(k, g, n)\lesssim \frac{(k+1)\log{(k+3)}}{|\chi(S_{g, n})|} 
		\]
	\end{thm*}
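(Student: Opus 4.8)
The plan is to prove the two inequalities separately: the left one by adapting the strategy of Agol--Leininger--Margalit \cite{agol2016pseudo} to punctured fibers, and the right one by an explicit construction in which the hypotheses on $(k,g,n)$ and the extra $\log$ both originate. It is useful to record at the outset that the chain $4g-4\le k+1\le n\le 2k-4g+6$ forces $|\chi(S_{g,n})|=2g-2+n\asymp k+1$ (from $n\le 2k-4g+6$ one gets $|\chi(S_{g,n})|\le 2k+4$, and from $n\ge k+1$ one gets $|\chi(S_{g,n})|\ge k-1$), so the two displayed quantities are $\asymp 1$ and $\asymp\log(k+3)$ respectively.

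For the lower bound, let $f\colon S_{g,n}\to S_{g,n}$ be an arbitrary pseudo-Anosov with $\kappa(f)\ge k$ and let $M_f$ be its mapping torus, a finite-volume hyperbolic $3$-manifold by Thurston. The Wang sequence of the fibration $S_{g,n}\hookrightarrow M_f\to S^1$ gives $H_1(M_f;\R)\cong\R\oplus\operatorname{coker}(f_*-\mathrm{Id})$, so $b_1(M_f)=\kappa(f)+1\ge k+1$. I would then combine two facts: (i) any finite-volume hyperbolic $3$-manifold $M$ satisfies $b_1(M)\lesssim\vol(M)$, since the thick--thin decomposition yields a triangulation with $O(\vol(M))$ simplices (the cusps and short-geodesic tubes each being $O(\vol(M))$ in number); and (ii) the mapping torus of a pseudo-Anosov $\phi$ on a surface $S$ satisfies $\vol(M_\phi)\lesssim|\chi(S)|\,h(\phi)$, which follows from the bound on the number of tetrahedra in Agol's veering triangulation of the fully punctured model (together with $\vol(M_\phi)<\vol(\text{that model})$), or from Brock's volume estimates. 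Chaining, $k+1\le b_1(M_f)\lesssim\vol(M_f)\lesssim|\chi(S_{g,n})|\,h(f)$, hence $h(f)\gtrsim (k+1)/|\chi(S_{g,n})|$; taking the infimum gives the left inequality. Only the Euler-characteristic bookkeeping in (ii) (how many punctures are created by drilling the interior singular orbits, controlled by the Euler--Poincar\'{e} formula) needs to be checked in the punctured setting.

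For the upper bound I would, for each admissible $(k,g,n)$, produce a pseudo-Anosov $\phi$ on $S_{g,n}$ that is \emph{small}, $\log\lambda(\phi)\lesssim\dfrac{\log(|\chi(S_{g,n})|+2)}{|\chi(S_{g,n})|}$, and whose permutation of the $n$ punctures has order $r\lesssim|\chi(S_{g,n})|$ (ideally a single $n$-cycle). These are built from explicit invariant train tracks, generalizing the low-dilatation families of Valdivia \cite{valdivia2012sequences} and Tsai \cite{tsai2009asymptotic}; the constraint $n\le 2k-4g+6$ (equivalently $g\le(k+5)/4$, hence $g\lesssim n$) is what places $S_{g,n}$ in the ``enough punctures relative to genus'' regime where such a construction is available. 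Now set $f=\phi^{r}$. Since $f$ fixes each puncture, $f_*$ is the identity on the $(n-1)$-dimensional peripheral subspace of $H_1(S_{g,n};\R)$, so $\kappa(f)\ge n-1\ge k$ (using $n\ge k+1$). And $h(f)=r\,h(\phi)\lesssim|\chi(S_{g,n})|\cdot\dfrac{\log(|\chi(S_{g,n})|+2)}{|\chi(S_{g,n})|}=\log(|\chi(S_{g,n})|+2)$, which by the first paragraph is $\lesssim\log(k+3)\lesssim\dfrac{(k+1)\log(k+3)}{|\chi(S_{g,n})|}$ (here one uses $|\chi(S_{g,n})|\lesssim k+1$). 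Taking the infimum over all such $f$ gives the right inequality.

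The step I expect to be the main obstacle is the construction of $\phi$ in the previous paragraph: one needs a single family of pseudo-Anosov maps, parametrized over the whole triangular region $4g-4\le k+1\le n\le 2k-4g+6$, that simultaneously has near-minimal dilatation \emph{and} a puncture permutation of at most linear order --- a permutation of, say, super-polynomial order would make $h(\phi^{r})=r\,h(\phi)$ blow up and ruin the estimate. Concretely, this means exhibiting invariant train tracks on $S_{g,n}$ explicit enough to bound the Perron--Frobenius eigenvalue of the transition matrix uniformly in $(g,n)$, while keeping the cyclic structure of the singular (hence puncture) orbits under control. Everything else is routine: the homological identities $\kappa(\phi^{r})\ge n-1$ and $b_1(M_f)=\kappa(f)+1$, the reduction to the fully punctured model in the lower bound, and the final arithmetic with the inequalities on $(k,g,n)$.
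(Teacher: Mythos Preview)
Your lower-bound argument is essentially the paper's: chain the identity $b_1(M_f)=\kappa(f)+1$ (from Mayer--Vietoris for the mapping torus), the inequality $b_1(M)\lesssim\vol(M)$ (Proposition~2.2 of \cite{agol2016pseudo}), and $\vol(M_f)\lesssim|\chi(S)|\,h(f)$ (Kojima--McShane, Theorem~1.1 of \cite{kojima2018normalized}). All three ingredients hold verbatim for punctured fibers, so nothing new is needed there.

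Your upper-bound strategy, however, is entirely different from the paper's, and the obstacle you single out --- producing on each $S_{g,n}$ a near-minimal pseudo-Anosov whose puncture permutation has at most linear order --- is a genuine gap rather than a routine verification. None of the existing low-dilatation families (Tsai, Valdivia, etc.) is known to give controlled puncture-cycle structure uniformly across the whole region $4g-4\le k+1\le n\le 2k-4g+6$, and there is no mechanism in your sketch that forces it.

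The paper sidesteps this completely by working not on a fixed surface but inside a fixed \emph{$3$-manifold} for each $k$. It takes $M(k+1)$, the complement of the $(k+1)$-chained link $C(k+1)$ with two half-twists; this has $b_1=k+1$, so \emph{every} fiber monodromy of $M(k+1)$ automatically satisfies $\kappa=k$ --- no puncture-permutation bookkeeping is ever needed. One explicit fiber is the Seifert surface $S_{1,k+1}$ at the integral class $\mathbbm{1}=(1,\dots,1)$; its monodromy is a product of Dehn twists read off from a Murasugi-sum decomposition into Hopf bands, and Thurston's construction computes its stretch factor exactly, giving $h(\mathbbm{1})<\log(k+3)$. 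Next, using the Thurston-norm and boundary-count formulas on the fibered cone (Corollary~\ref{cor:sub_facets_in_C(n,0)} and Lemma~\ref{lem:gen_bdd_formula}), the paper writes down for each admissible $(k,g,n)$ an explicit primitive class $\mathfrak{s}=s^{(g-1,\,n-k-1)}_{2k-4g+6-n}$ in that cone whose fiber has topological type $S_{g,n}$. Fried's convexity of entropy on the fibered cone (Lemma~\ref{lem:nor_ent_convex}) then gives $h(\mathfrak{s})<h(\mathbbm{1})<\log(k+3)$, and dividing the normalized entropy by $|\chi(S_{g,n})|$ yields the bound. The inequalities $4g-4\le k+1\le n\le 2k-4g+6$ are precisely the nonnegativity conditions on the three indices $g-1$, $n-k-1$, $2k-4g+6-n$ of $\mathfrak{s}$; they arise from the combinatorics of the cone, not from any construction on $S_{g,n}$ itself.

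In short, the paper replaces your hard step (build one tailored small map on each $S_{g,n}$) with fibered-face technology: a single stretch-factor computation on $S_{1,k+1}$, plus convexity, propagates the bound to every fiber in the cone at once. What your approach would buy, were the construction available, is independence from Thurston-norm machinery and perhaps a route to values of $(k,g,n)$ outside the paper's triangular region; what the paper's approach buys is that the existence and $\kappa$-value of the required map are automatic, and only an elementary Perron--Frobenius calculation remains.
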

	
\noindent where $\lesssim$ means the inequality holds up to a constant multiple. The theorem follows from the discussion in Section \ref{subsec:lowerbdd} (lower bound) and Theorem \ref{thm:main_ineq} (upper bound). In fact, as discussed in Section \ref{subsec:lowerbdd}, the lower bound holds without any restriction on $k, g, n$ and it follows from some basic 3-dimensional geometry and the Kojima-McShane's inequality $h(f) \gtrsim \dfrac{\vol(M_f)}{|\chi(S)|}$ where $f \in \Mod(S)$ is pseudo-Anosov and $M_f$ is the mapping torus with monodromy $f$ (see Theorem 1.1 of \cite{kojima2018normalized}). 

\begin{figure}[h]
    \centering
    \includegraphics[width = 0.7\textwidth]{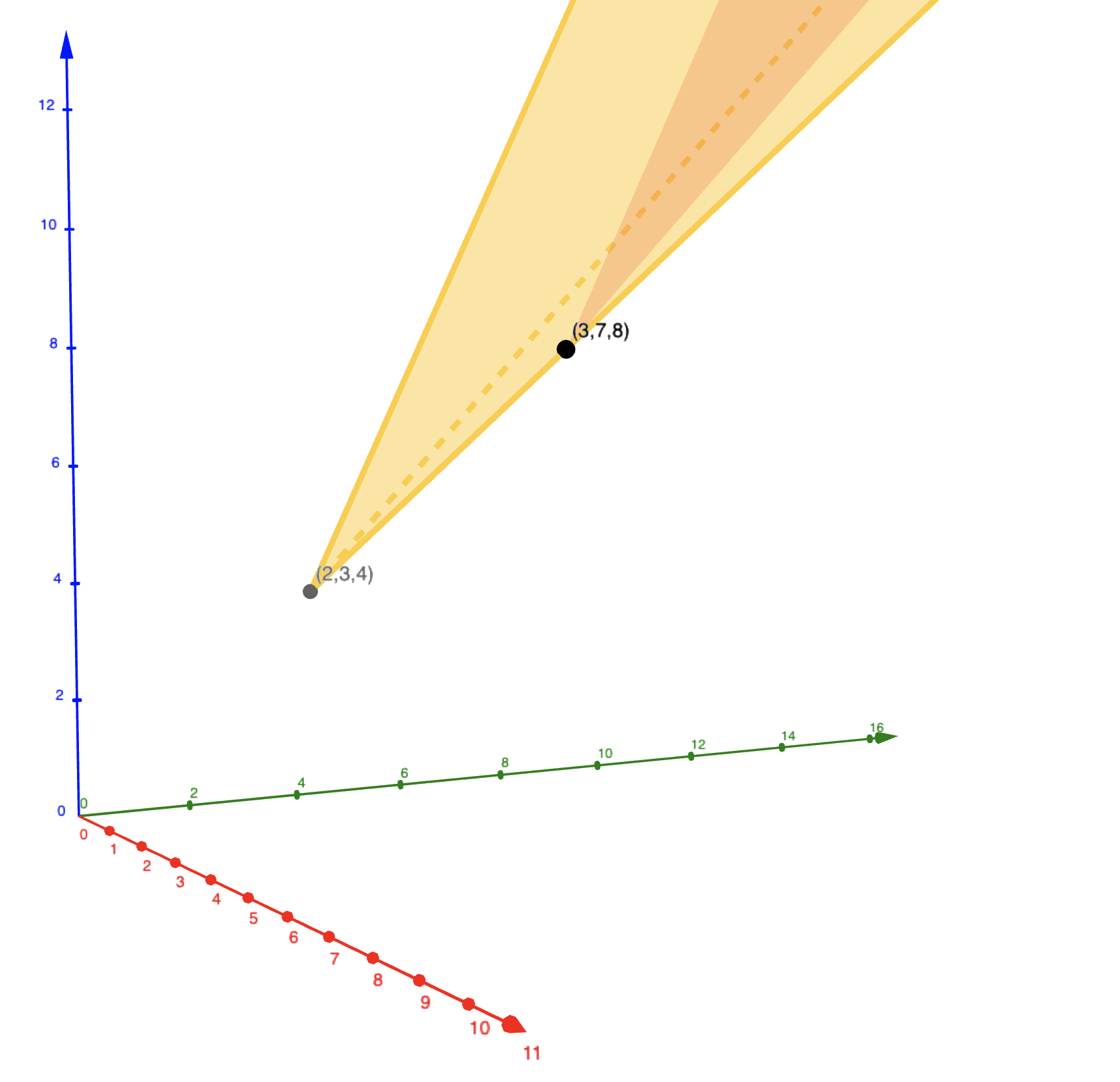}
    \caption{A visualisation of the domain for the main inequality of Theorem \ref{thm:main_thm_intro}. The variables $g, k$ and $n$ correspond to the red, green and blue axis, respectively.
    The darker region is the slice of the cone at $g=3$
    }
    \label{fig:plot}
\end{figure}

To obtain the upper bound in the main theorem, we investigate certain sequences of fibers in the fibered cone of hyperbolic link complements which fiber over the circle. Let $C(n)$ be a $n$ chained link with $2$ half-twists (for a more precise description, see Section \ref{sec:nchainlink}), and let $M(n)$ be the complement of a small regular neighborhood of $C(n)$. We first study a specific fibration of $M(n)$ in Section \ref{sec:stretch_factor}, and then study primitive integral classes in the fibered cone which are projectively near the given fibration . As in \cite{agol2016pseudo}, the fact that the normalized entropy of monodromies extends to a continuous convex function on the fibered face (Fried \cite{fried1982flowequivalence}, \cite{fried1983anosovflow}) then allows us to compute this upper bound (the (non-)convexity of translation lengths on curve complexes and arc complexes are also a topic of active research. See \cite{baik2019asymptotic}, \cite{strenner2018fibrations} for example). In Section \ref{sec:magic3mfld}, we describe the situation in a toy example, namely the magic manifold. 

\begin{figure}
    \centering
    \includegraphics[width = 0.7\textwidth]{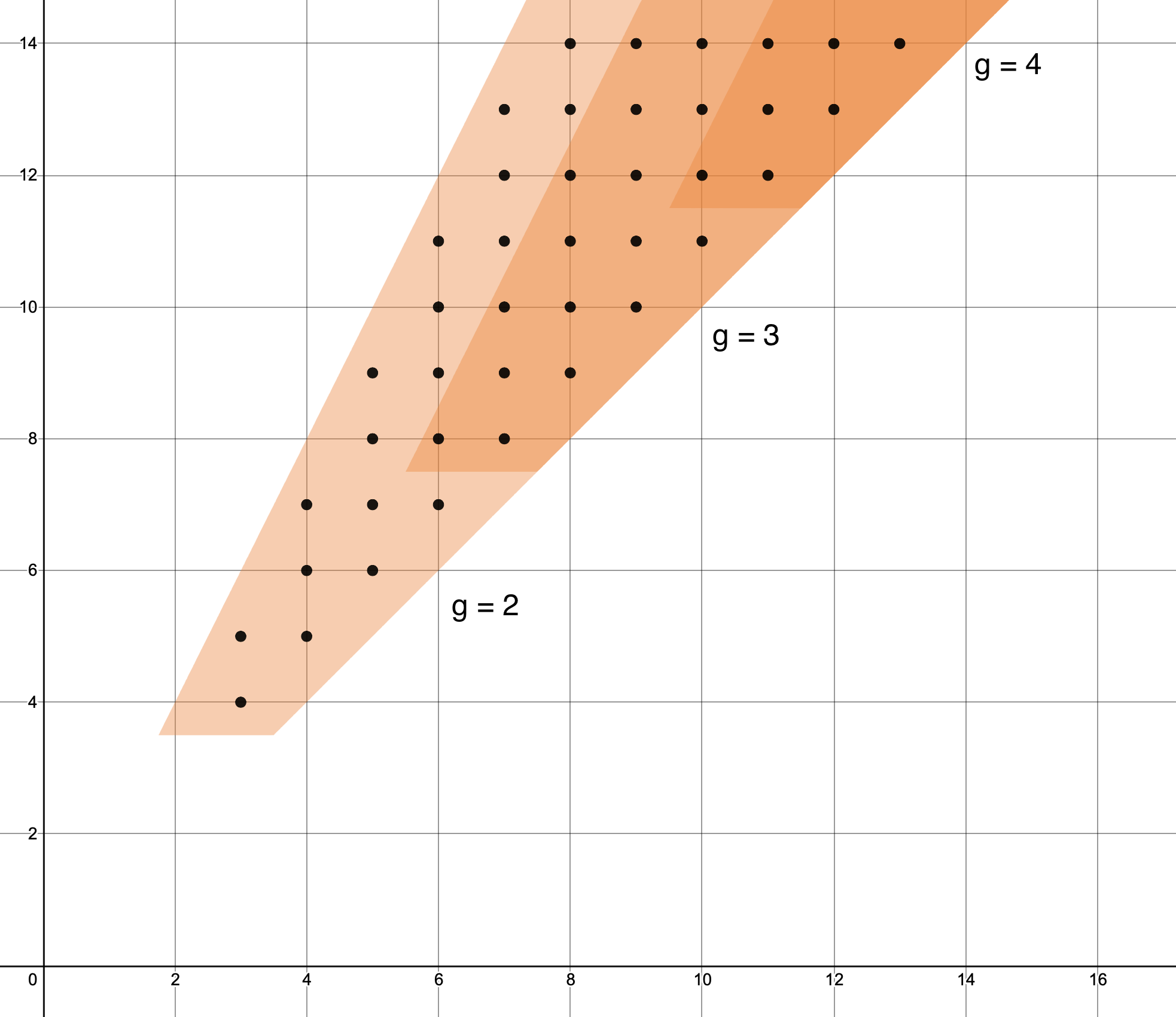}
    \caption{This is a graphical representation of the conditions of the Corollary. The vertical and horizontal axis correspond to $n,k$, respectively. The entire colored area is the $g=2$ slice of our domain and darker areas correspond to slices at higher values of $g$.}
    \label{fig:slice}
\end{figure}

One corollary of the upper bound in the main theorem is that for fixed $g \geq 2$, when $n \geq 4g-4$ and $\dfrac{n}{2} + 2g - 3 < k < n$, we have $L(k, g, n) \lesssim \dfrac{(k+1)\log n}{n}$.
We provide a visual representation of the domains of such slice of fixed genus in figure \ref{fig:slice}.
This is a partial generalization of Tsai's work \cite{tsai2009asymptotic} where it was shown that $L(0, g, n) \asymp \dfrac{\log n}{n}$ for fixed $g \geq 2$. From the viewpoint of \cite{tsai2009asymptotic}, the log term in the upper bound is quite natural. On the other hand, it can be shown that the current method for obtaining the lower bound using Kojima-McShane's inequality cannot be improved to get a better lower bound. But there might be another way to obtain a better lower bound. Also, it would be interesting to explore this question outside the range of $k, g, n$ we covered in our result. We leave this as the following question. 

 \begin{que}
 		For fixed $g$, do we have $L(k, g, n)\asymp \dfrac{(k+1)\log{n}}{|\chi(S_{g, n})|}$ in general ?
 \end{que}\label{que:main}
	Another direction of further research is to study the complements of the $n$ chained links with $p$ half-twists. In an upcoming paper of the second and the fourth authors, they compute the unit Thurston norm balls and Teichm\"uller polynomials in these settings.

	\subsection*{Acknowledgement}

	We thank Robert Billet, Dongryul M. Kim, Livio Liechti, Dan Margalit, JungHwan Park, Chenxi Wu for helpful discussions. We also thank William Worden and his program Tnorm which helped us to understand the shape of some fibered faces.
	All authors were partially supported by Samsung Science and Technology Foundation under Project Number SSTF-BA1702-01. 

\section{Preliminaries and the lower bound}\label{sec:prelim}

As stated in the introduction, we are interested in the asymptotic behavior of the quantity $L(k,g,n) = min\{h(f) \mid f\colon S_{g,n} \to S_{g,n}$ and $\kappa(f) \geq k\}$. 
This quantity interpolates between $L(0, g, n)$, which is the minimal entropy among all pseudo-Anosov map of $S_{g, n}$, and $L(2g, g, 0)$, which is the minimal entropy among all pseudo-Anosov map in a Torelli subgroup. 
In \cite{agol2016pseudo}, the authors establish the asymptotic behavior of $L(k,g)$. We start this section by recalling their arguments for the lower bound and stating them explicitly for the case punctured surfaces.
Then we will quickly review the core notions needed for the rest of paper, such as Thurston's norm and Thurston's construction on surfaces.

\subsection{Lower bound}\label{subsec:lowerbdd}

To get a lower bound for $L(k,g,n)$, we use the same sequence of inequalities as in \cite{agol2016pseudo}. Their argument actually works verbatim for punctured surfaces too, but since they stated it only in the case of closed surfaces we recall it explicitly here.

\begin{thm}[Theorem 1.1 of \cite{kojima2018normalized}]
    For a pseudo-Anosov map $f$ on surface $S_{g,n}$, the inequality
    $$3\pi |\chi(S_{g,n})| h(f) \geq\vol(M_f)  $$
    holds.
\end{thm}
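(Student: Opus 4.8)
The plan is to bound $\vol(M_f)$ from above by a co--area estimate along the fibration of $M_f$ over the circle, using that $M_f$ is itself a hyperbolic $3$--manifold. First I would assemble two standard inputs. By Thurston's hyperbolization theorem for surface bundles, $M_f$ carries a complete finite--volume hyperbolic metric --- with one cusp for each $f$--orbit of punctures of $S_{g,n}$ --- so $\vol(M_f)$ is a well--defined invariant of $f$; and $h(f)=\log\lambda(f)$, where $\lambda(f)$ is the dilatation, which in turn equals the translation length of $f$ acting on Teichm\"uller space with the Teichm\"uller metric. In particular the suspension of $f$ exhibits $M_f$ as a bundle over a circle whose ``dynamical length'' --- the period of the Teichm\"uller geodesic realizing the invariant foliations of $f$ --- is exactly $\log\lambda(f)$.

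For the main estimate I would sweep $M_f$ out by its fibers. Since the fiber class is $\pi_1$--injective, one can choose a degree--one map $F\colon S_{g,n}\times(\mathbb{R}/\log\lambda(f)\mathbb{Z})\to M_f$ in the homotopy class of the fibration, for which every slice $F_t:=F|_{S_{g,n}\times\{t\}}$ is realized --- for instance by interpolating pleated realizations of the fibers, or by a minimal--surface sweepout --- with $\operatorname{Area}(F_t)\le 2\pi|\chi(S_{g,n})|$ for all $t$: a pleated fiber has area exactly $2\pi|\chi(S_{g,n})|$ by Gauss--Bonnet, while by the Gauss equation a minimal fiber has intrinsic curvature $-1+\kappa_1\kappa_2\le-1$, hence area at most $2\pi|\chi(S_{g,n})|$ as well. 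Pulling back the hyperbolic volume form along $F$ and estimating its Jacobian pointwise (the $3$--Jacobian is at most the norm of $dF(\partial_t)$ times the area distortion of $F_t$), one gets
\[
\vol(M_f)\ \le\ \int_{\mathbb{R}/\log\lambda(f)\mathbb{Z}}\operatorname{Area}(F_t)\,\|\partial_t F\|_{\infty}\,dt\ \le\ 2\pi\,|\chi(S_{g,n})|\cdot W,
\]
where $W:=\int_{\mathbb{R}/\log\lambda(f)\mathbb{Z}}\|\partial_t F\|_{\infty}\,dt$ is the transverse width of the sweepout. It then remains to arrange the sweepout so that $W\le\tfrac32\log\lambda(f)$ --- this is where the Teichm\"uller--geometric meaning of $\log\lambda(f)$ enters --- and multiplying gives $\vol(M_f)\le 3\pi\,|\chi(S_{g,n})|\log\lambda(f)=3\pi\,|\chi(S_{g,n})|\,h(f)$.

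The technical heart, and the main obstacle, is exactly this width bound together with the extraction of the sharp constant: a naive sweepout produces the area factor $2\pi|\chi(S_{g,n})|$ but no ``unit--speed'' transverse parametrization, so one must choose the interpolating surfaces with care, and it is this correction that turns the expected $2\pi$ into $3\pi$. A convenient equivalent route, which I would keep in reserve, is to build directly on $M_f$ a comparison metric $\bar g$ --- a warped or conformally modified version of the singular Euclidean ``SOL--type'' structure carried by the invariant quadratic differential, or of the path of hyperbolic fiber metrics along the Teichm\"uller geodesic --- with $\operatorname{Ric}(\bar g)\ge-2\bar g$ and $\vol(\bar g)\le 3\pi|\chi(S_{g,n})|\log\lambda(f)$; then Gromov's inequality $\vol(\bar g)\ge v_3\|M_f\|$ and the Gromov--Thurston identity $\|M_f\|=\vol_{\mathbb H}(M_f)/v_3$ (in its version for cusped manifolds) finish the proof. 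Finally, because $S_{g,n}$ has punctures one must carry the non--compactness throughout: truncate the cusps of $M_f$, control the areas of the truncated fibers and the volume discarded, and handle the singular locus of the invariant foliations, where both the flat structure and the pleating laminations fail to be smooth. No hypothesis on the homology or any other feature of $f$ is needed: the estimate holds for every pseudo--Anosov on every $S_{g,n}$ with $\chi(S_{g,n})<0$.
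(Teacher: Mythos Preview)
This statement is not proved in the paper; it is quoted as Theorem~1.1 of Kojima--McShane \cite{kojima2018normalized} and used as a black box in \S\ref{subsec:lowerbdd}. There is thus no proof here to compare against, so I can only assess your sketch on its own.

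Both routes you propose leave the same genuine gap: the extraction of the constant. In the sweepout approach you reduce everything to the transverse width bound $W\le\tfrac32\log\lambda(f)$, which you yourself label ``the technical heart, and the main obstacle,'' and then do not justify; nothing in your setup relates the speed $\|\partial_tF\|_\infty$ of a pleated or minimal fiber in the \emph{hyperbolic} metric on $M_f$ to the Teichm\"uller parameter $t$, which only tracks how the \emph{conformal structure} of the fiber changes, so without a separate argument this bound is simply an assertion and the co-area estimate is vacuous. In the comparison-metric route you invoke ``$\vol(\bar g)\ge v_3\|M_f\|$'' under a Ricci lower bound, but the sharp constant $v_3$ comes from simplex straightening and requires sectional curvature $\le -1$, which the Sol metric violates (its fiber plane has curvature $+1$); a Ricci lower bound alone yields only a non-sharp Gromov-type constant that would not give $3\pi$. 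The \emph{singular} Sol metric carried by the invariant quadratic differential moreover has distributional negative curvature concentrated along the suspensions of the foliation singularities, and it is precisely the analysis of how to smooth these while controlling both volume and curvature --- together with the correct volume-comparison input and the treatment of cusps in the punctured case --- that constitutes Kojima--McShane's actual argument and is absent from your outline.
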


\begin{prop}[Proposition 2.2 of \cite{agol2016pseudo}]
    If $M$ is a complete, orientable, hyperbolic 3-manifold of finite volume, then
    $$b_1(M) \leq 334.08 \cdot \vol(M) $$
\end{prop}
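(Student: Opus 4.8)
The plan is to bound $b_1(M)=\dim_{\R}H^1(M;\R)$ by building an open cover of $M$ with at most $C\cdot\vol(M)$ pieces, each of controlled topology, and then running the Mayer--Vietoris spectral sequence. Since $M$ has finite volume it is the interior of a compact $3$-manifold $\bar M$ with torus boundary, and $b_1(M)=b_1(\bar M)$; so every group in sight is finitely generated and the whole point is the linear-in-volume estimate on the rank.

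The geometric input is the thick--thin decomposition. Fix $\varepsilon$ a Margulis constant for $\bH^3$ and write $M=M_{\ge\varepsilon}\cup M_{<\varepsilon}$. The thin part is a disjoint union of Margulis tubes around short closed geodesics (each $\simeq S^1$) and cusp neighborhoods (each $\simeq T^2$); the number of its components is at most $C_1\cdot\vol(M)$ because every maximal cusp and every maximal Margulis tube has volume bounded below by a universal positive constant, and these regions are disjoint. In the thick part every point has an embedded $(\varepsilon/2)$-ball, so a maximal $\varepsilon$-separated net $\mathcal N\subset M_{\ge\varepsilon}$ has at most $\vol(M)/v_{\bH^3}(\varepsilon/4)=:C_2\cdot\vol(M)$ points (the $(\varepsilon/4)$-balls about $\mathcal N$ are disjoint and embedded), while the $\varepsilon$-balls about $\mathcal N$ cover $M_{\ge\varepsilon}$, are embedded, and are contractible. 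Mildly enlarging the balls and the thin components yields an open cover $\{U_i\}_{i=1}^N$ of $M$ with $N\le(C_1+C_2)\cdot\vol(M)$ and every $U_i$ contractible, or $\simeq S^1$, or $\simeq T^2$; in particular $b_1(U_i)\le 2$ for all $i$.

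Next one controls the combinatorics of the cover. A standard packing estimate in $\bH^3$ (compare volumes of concentric balls) shows that each point of $M$ lies in at most a universal number $D$ of the $\varepsilon$-balls, and each thin component meets at most $D'$ of them; hence the number of unordered pairs $\{i,j\}$ with $U_i\cap U_j\ne\emptyset$ is at most $(D+D')N$. The spectral sequence $E_1^{p,q}=\bigoplus_{|I|=p+1}H^q(U_I;\R)\Rightarrow H^{p+q}(M;\R)$ then yields
\[
b_1(M)\ \le\ \sum_{i=1}^N b_1(U_i)\ +\ \#\{\{i,j\}:U_i\cap U_j\ne\emptyset\}\ \le\ \bigl(2+D+D'\bigr)N\ \le\ C\cdot\vol(M).
\]
Optimizing over $\varepsilon$ and the auxiliary radii, and inserting the sharp known lower bounds for maximal-cusp and Margulis-tube volumes together with the best ball-packing constant, is exactly what produces the explicit value $334.08$. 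Alternatively one may bound $b_1(M)\le\operatorname{rank}\pi_1(M)$ and count generators arising from the edges of $\mathcal N$ and the loops in the thin part; the cohomological route is tidier for keeping track of the constant.

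The only real difficulty is quantitative. The qualitative bound $b_1(M)\lesssim\vol(M)$ drops out of any incarnation of ``bounded geometry away from the thin part'', but pinning down the numerical constant needs (i) the strongest available universal lower bounds on maximal-cusp and Margulis-tube volumes, to control the number of thin pieces, and (ii) a careful separation/packing count in $\bH^3$ bounding both $N$ and the overlap multiplicity $D$. If one prefers to invoke the nerve lemma in place of the spectral sequence, one must additionally arrange all multiple intersections to be connected, which costs some extra bookkeeping (shrinking and re-thickening the cover) but changes nothing essential.
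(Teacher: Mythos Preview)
The paper does not prove this proposition; it is quoted from Agol--Leininger--Margalit \cite{agol2016pseudo} and used as a black box. So there is no proof in the present paper to compare against. For what it is worth, the argument in \cite{agol2016pseudo} runs through $b_1(M)\le\operatorname{rank}\pi_1(M)$ together with a ball--packing count in the thick part, which is the alternative you mention at the end rather than the Mayer--Vietoris route you lead with.

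Your sketch is qualitatively sound, but there is one genuine slip. You assert that ``every maximal cusp and every maximal Margulis tube has volume bounded below by a universal positive constant''. For cusps this is true, but for tubes it is false as stated: a closed geodesic of length just below the chosen $\varepsilon$ has an $\varepsilon$--Margulis tube of arbitrarily small radius, hence arbitrarily small volume, so you cannot bound the number of tube components this way. There are two standard fixes. Either work with two scales $\varepsilon'<\varepsilon$ (declare the thin part at level $\varepsilon'$, and observe that each $\varepsilon'$--short geodesic then has an $\varepsilon$--tube of definite radius, and these larger tubes are still pairwise disjoint), or bypass the tube count entirely by noting that the inclusion $M_{\ge\varepsilon}\hookrightarrow M$ induces a surjection on $\pi_1$ (filling in a solid torus only adds a relation, and a cusp neighborhood is a collar), so that $\operatorname{rank}\pi_1(M)\le\operatorname{rank}\pi_1(M_{\ge\varepsilon})$ and only the thick--part packing matters. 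The second route is exactly what makes the $\operatorname{rank}\pi_1$ approach cleaner than the spectral--sequence approach here; in your phrasing of the alternative you still propose to ``count \dots the loops in the thin part'', which reintroduces the same problem. With either fix the linear bound $b_1(M)\le C\cdot\vol(M)$ follows; the specific constant $334.08$ then comes from inserting Meyerhoff's explicit lower bound on the $3$--dimensional Margulis constant and the hyperbolic ball--volume formula.
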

    
Whenever $M$ is a mapping torus with fibration $\varphi$, the first Betti number of $M$ is exactly one more than $\kappa(\varphi)$, the dimension of the homological  fixed subspace of $\varphi$. 
This fact is a direct consequence of the Mayer-Vietoris sequence associated to a mapping torus.
Combining all of this, we get the desired result:

\begin{thm}
    For any values of $k,g$ and $n$, the following holds
    $$.00031\frac{k+1}{|\chi(S_{g,n})|} \leq L(k,g,n)$$
    In other words,
    $$\frac{k+1}{|\chi(S_{g,n})|} \lesssim L(k,g,n)$$
\end{thm}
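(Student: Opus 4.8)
The plan is to chain together the three results just stated and do only elementary bookkeeping with the numerical constants. Concretely, let $f\colon S_{g,n}\to S_{g,n}$ be any pseudo-Anosov map with $\kappa(f)\geq k$, and let $M_f$ be its mapping torus. First I would invoke the Kojima--McShane inequality (Theorem 1.1 of \cite{kojima2018normalized}) in the form $3\pi|\chi(S_{g,n})|\,h(f)\geq \vol(M_f)$, which rewrites as
\[
h(f)\;\geq\;\frac{\vol(M_f)}{3\pi\,|\chi(S_{g,n})|}.
\]
Second, since $M_f$ is a fibered hyperbolic $3$-manifold of finite volume, I would apply Proposition 2.2 of \cite{agol2016pseudo}, namely $b_1(M_f)\leq 334.08\cdot\vol(M_f)$, to get $\vol(M_f)\geq b_1(M_f)/334.08$. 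Third, I would use the Mayer--Vietoris observation recalled in the excerpt: for a mapping torus with monodromy $f$ one has $b_1(M_f)=\kappa(f)+1$, hence $b_1(M_f)\geq k+1$ by hypothesis.

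Combining the three steps gives
\[
h(f)\;\geq\;\frac{\vol(M_f)}{3\pi\,|\chi(S_{g,n})|}\;\geq\;\frac{b_1(M_f)}{334.08\cdot 3\pi\,|\chi(S_{g,n})|}\;\geq\;\frac{k+1}{334.08\cdot 3\pi\,|\chi(S_{g,n})|}.
\]
Now I would just check the arithmetic: $334.08\cdot 3\pi \approx 334.08\cdot 9.4248 \approx 3148.6$, and $1/3148.6 \approx 0.0003176 > 0.00031$, so $h(f)\geq .00031\,(k+1)/|\chi(S_{g,n})|$. Since $f$ was an arbitrary pseudo-Anosov map with $\kappa(f)\geq k$, taking the infimum over all such $f$ yields $L(k,g,n)\geq .00031\,(k+1)/|\chi(S_{g,n})|$, which is exactly the claimed bound; the statement with $\lesssim$ is then immediate.

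There is essentially no obstacle here, as every ingredient is quoted as a black box; the only thing to be slightly careful about is that the hypotheses of each cited result are genuinely met. In particular one should note that $M_f$ is indeed hyperbolic (this is Thurston's hyperbolization for fibered manifolds with pseudo-Anosov monodromy) and of finite volume, so that Proposition 2.2 applies, and that the Mayer--Vietoris computation of $b_1$ is valid for $S_{g,n}$ with punctures exactly as in the closed case — which is the one point where we are mildly extending \cite{agol2016pseudo} beyond what they stated, though the argument is verbatim. I would also remark that no restriction on $k, g, n$ is used anywhere in this chain, which is why the lower bound holds in full generality, in contrast to the upper bound.
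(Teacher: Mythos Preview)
Your proof is correct and follows exactly the same approach as the paper: chain Kojima--McShane, Proposition~2.2 of \cite{agol2016pseudo}, and the Mayer--Vietoris identity $b_1(M_f)=\kappa(f)+1$, then compute the constant. Your added remarks on verifying hyperbolicity and on the punctured case of Mayer--Vietoris are reasonable due-diligence but not additional ideas.
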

\begin{proof}
    For any pseudo-Anosov $f$ with $\kappa (f)\geq k$, we get the inequality
    \begin{align*}
	k+1 &\leq \kappa(f) +1\\
	&= b_1(M_{f}) \\
	&\leq 334.08 \cdot \vol(M_f) \\
	&\leq  334.08 \cdot 3\pi |\chi(S_{g,n})| h(f) \\
	&\simeq 3147.8 |\chi(S_{g,n})| h(f).
    \end{align*}
\end{proof}

\subsection{Thurston's norm and fibered faces}\label{subsec:norm_and_fibface}

Let $M$ be an irreducible, atoroidal and oriented $3$-manifold with boundary $\partial M$. 
In \cite{thurston1986norm}, Thurston defined a norm $x$ on the second homology group $H_2(M,\partial M;\mathbb{Z)}$. 
When $a \in H_2(M,\partial M;\mathbb{Z)}$ is fibration $M \to S^1$ and $S$ is fiber, the norm is given by
\[
x(a) = {-\chi(S)}
\]

This function is then extended in the obvious way to rational number and Thurston proved that there is a unique continuous extension to the real numbers which is linear on rays through the origin. 
Thurston also showed that unit ball $B$ with respect to this norm is a convex polyhedron $P$. 
In other words, if $\mathcal{F}$ is a facet of $P$ and $\mathcal{C}$ is the cone over $\mathcal{F}$, the restriction of norm $x$ on $\mathcal{C}$ is linear.

Now suppose that $M \to S^1$ is a fibration of $M$ over the circle with fiber $S_g^n$ and monodromy $\varphi$. 
The surface $S_{g,n}$ then represents an integral point $a \in H_2(M,\partial M;\mathbb{Z)}$. 
Thurston proved that there is a top dimensional face $\Delta$ of $B$ such that $a$ is in the open cone $C(\Delta)$ through the origin and any other integral points $b \in H_2(M,\partial M;\mathbb{Z)}$ have minimal representatives $F_b$ that correspond to fibers of $M$ over $S^1$. 
Such a top dimensional face $\Delta$ is called a \textit{fibered face}. 

\subsection{Thurston's construction}\label{subsec:Thurstonconstruction}
In \cite{thurston1988construction}, Thurston described a way to create mapping classes on a surface and a criterion to check whether the constructed map is pseudo-Anosov. This theorem is now known under the name of Thurston's construction. We will use this to calculate the stretch factors of the monodromies we construct in section \ref{sec:stretch_factor}.

For a simple closed curve $\alpha$, we denote by $T_\alpha$ the positive Dehn twist around the curve $\alpha$. For a  multicurve $A=\{\alpha_1,\cdots,\alpha_m\}$, let us then define $T_A:=T_{\alpha_1}\cdots T_{\alpha_m}$.
Let $S$ be a surface and let $A$ and $B$ be two multicurves on S. 
We say that the two multicurves $A$ and $B$ fill the surface $S$ if, after cutting $S$ along $A$ and $B$, all remaining connected components are homeomorphic to disks or once punctured disks.

\begin{thm}[\cite{thurston1988construction}, Chapter 14.1 in \cite{farb2012primer}] \label{Thurston's construction}
    Let $A=\{\alpha_1,\cdots,\alpha_m\}$ and $B=\{\beta_1,\cdots, \beta_n\}$ be two multicurves that fill a surface $S$. 
    Define $N$ to be the matrix given by $N_{i,j}=i(\alpha_i,\beta_j)$ and $\mu$ to be the largest real eigenvalue of $NN^T$. Then there is a representation $\rho :\langle T_A,T_B\rangle  \to  \PSL{(2,\R)}$ given by
    \[
        T_A \mapsto 
        \begin{bmatrix} 
        1 & \sqrt{\mu} 
        \\ 0 & 1	
        \end{bmatrix} \quad \textrm{and} \quad T_B \mapsto 
        \begin{bmatrix} 	
        1 & 0 \\
        -\sqrt{\mu} & 1 	
        \end{bmatrix}.
    \]
     Furthermore, an element in $\langle T_A,T_B \rangle$ is pseudo-Anosov if and only if its representation is hyperbolic and its stretch factor is then equal to the largest eigenvalue of its representation.
\end{thm}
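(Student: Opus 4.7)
The plan is to produce an explicit singular Euclidean (half-translation) structure on $S$ in which every curve of $A$ is a horizontal geodesic and every curve of $B$ is a vertical geodesic, and then read off the representation $\rho$ from the fact that $T_A$ and $T_B$ become affine homeomorphisms of this flat structure. Once the flat structure is built, the $\PSL(2,\R)$ matrices appear as the derivatives of these affine maps, and the pseudo-Anosov criterion becomes the classical statement that an affine self-homeomorphism of a half-translation surface is pseudo-Anosov iff its linear part is hyperbolic, in which case its stretch factor equals the spectral radius of that linear part (Veech; see Chapter~14 of \cite{farb2012primer}).

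For the construction, since $A$ and $B$ fill $S$ the connected components of $S \setminus (A \cup B)$ are disks or once-punctured disks, and every intersection point $p \in \alpha_i \cap \beta_j$ is the common corner of four such regions. I would introduce positive weights $h_1,\ldots,h_m$ attached to the curves of $A$ and $v_1,\ldots,v_n$ attached to the curves of $B$, and then at each intersection point $p \in \alpha_i \cap \beta_j$ glue in a Euclidean rectangle of horizontal width $h_i$ and vertical height $v_j$. For the rectangles to glue into a globally defined flat structure, the horizontal length of each $\alpha_i$ must agree with the sum of the widths of the rectangles crossing it and similarly for each $\beta_j$, which forces
\[
    \sum_{j} N_{i,j}\, v_j = \lambda\, h_i \quad \text{and} \quad \sum_{i} N_{i,j}\, h_i = \lambda\, v_j
\]
for some $\lambda > 0$; equivalently $N v = \lambda h$ and $N^T h = \lambda v$, so $NN^T h = \lambda^2 h$. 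The filling hypothesis makes $NN^T$ nonnegative and irreducible, so Perron--Frobenius produces positive $h$ and $v$ realizing $\mu = \lambda^2$ as the largest eigenvalue of $NN^T$.

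In this flat structure the multi-twist $T_A$ preserves each horizontal leaf and performs one full shear across each vertical strip of width $v_j$ that it sweeps; in the global affine chart given by the flat structure this is exactly the parabolic $\bigl[\begin{smallmatrix}1 & \sqrt{\mu}\\ 0 & 1\end{smallmatrix}\bigr]$, and symmetrically $T_B$ has derivative $\bigl[\begin{smallmatrix}1 & 0\\ -\sqrt{\mu} & 1\end{smallmatrix}\bigr]$. Taking derivatives defines the homomorphism $\rho\colon \langle T_A, T_B\rangle \to \PSL(2,\R)$, and the Veech theorem cited above gives the pseudo-Anosov characterization and the stretch factor.

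The main obstacle I expect is purely combinatorial rather than analytic: one has to check that the rectangle gluing is globally consistent, i.e.\ that the cyclic order of the four rectangles at every intersection corner assembles into a well-defined neighborhood, that the widths and heights match up as one goes around every complementary polygon, and that the cone angles at the centers of the complementary polygons are integer multiples of $\pi$ (so that we really obtain a half-translation structure, with the once-punctured regions accounting for the cusps of $S_{g,n}$). All of these verifications reduce to the eigenvector equation displayed above together with an Euler-characteristic computation on the CW structure dual to $A \cup B$, but the bookkeeping around punctures is where one has to be most careful.
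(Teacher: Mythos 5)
Your proposal is essentially the standard proof of Thurston's construction as given in the cited sources (Thurston's original paper and Chapter 14.1 of Farb--Margalit); the paper itself does not prove this theorem but quotes it, adding only the remark that $\mu$ exists by Perron--Frobenius, so there is nothing in the paper to diverge from. One small correction to your write-up: the rectangles (one per point of $\alpha_i \cap \beta_j$, with horizontal width the weight of the vertical curve $\beta_j$ and vertical height the weight of the horizontal curve $\alpha_i$) glue edge-to-edge into a singular flat structure for \emph{any} choice of positive weights, since adjacent rectangles along a given $\alpha_i$ or $\beta_j$ automatically have matching edge lengths; the Perron--Frobenius eigenvector condition $Nv=\sqrt{\mu}\,h$, $N^{T}h=\sqrt{\mu}\,v$ is needed not for the gluing but so that all horizontal (resp.\ vertical) cylinders have the same inverse modulus $\sqrt{\mu}$, making $T_A$ and $T_B$ affine with the stated parabolic derivatives, after which the Veech dichotomy for affine homeomorphisms gives the pseudo-Anosov criterion and the stretch factor exactly as you say.
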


We remark that the existence of such a $\mu$ is guaranteed by Perron-Frobenious's theorem. For more details on Thurston's construction, we refer to Chapter 14.1 in \cite{farb2012primer}, for example.

\section{Examples in Magic $3$-manifold}\label{sec:magic3mfld}

To illustrate the techniques we will use in this paper, we start with some motivating toy examples which arise from fibers in the magic $3$-manifold. That being said, these examples yield some results which are not covered by the main theorem of this paper, so they are of interest on their own also.

Let $M$ be the magic 3-manifold, which is the complement of a small regular neighborhood of the 3-chain link shown in figure \ref{fig:magic3mfld}.

\begin{figure}[h]
	\centering
	\includegraphics[scale=0.7]{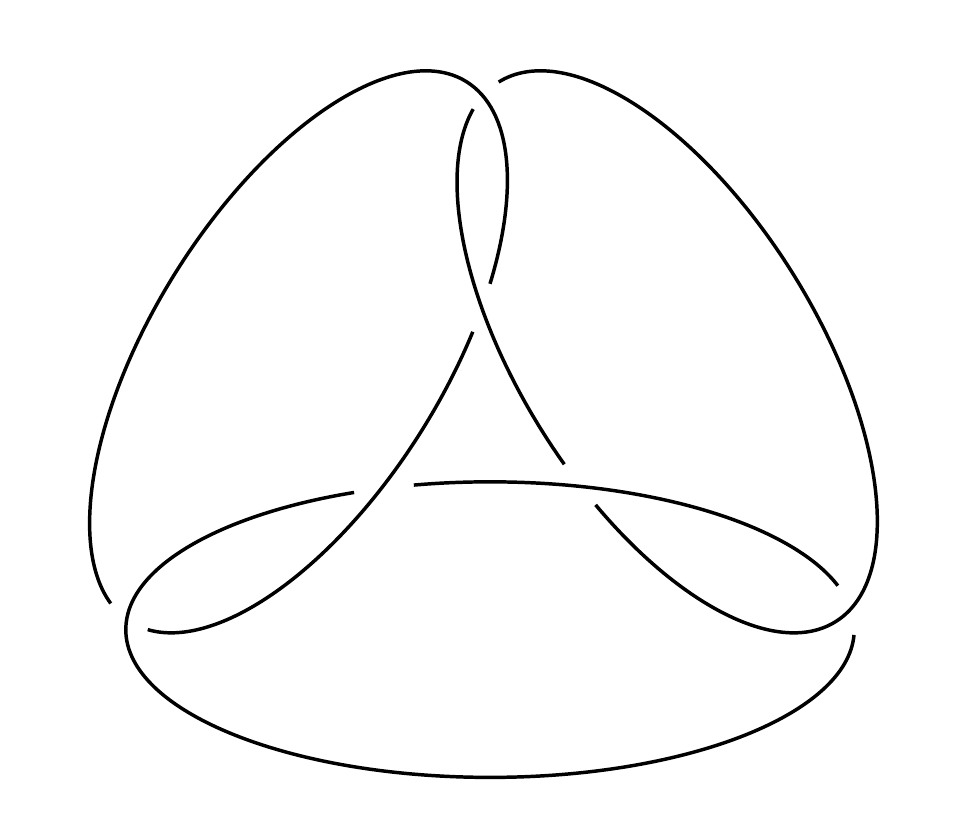}
	\caption{The magic manifold is the complement of a regular neighborhood of this alternating link}
	\label{fig:magic3mfld}
\end{figure}

As the first Betti number of $M$ is $3$, every monodromy map of its fiber fixes a homological subspace of dimension $2$. 
One of the fibered faces $\Delta$ of $M$ is the convex hull of the points $(1, 0, 0)$, $(1, 1, 1)$, $(0, 1, 0)$ and $(0, 0, -1)$. It is described as $\Delta := \{(x, y, z) ~|~ x+y-z = 1, x>0, y>0, x>z, y>z\}$.
In \cite{kin2014dynamics}, Eiko Kin shows $a = (1,1,0)$ represents a fiber and how to calculate the number of boundaries of fibered classes in the fibered cone $C(\Delta) := \R^+\cdot\Delta$ corresponding to the fibered face $\Delta$:

\begin{lem}[Lemma 2.6 in \cite{kin2014dynamics}]\label{lem:bdd_formula}
	Let $(x,y,z)$ be a primitive integer tuple in $C(\Delta)$.
	Then we have
	\begin{enumerate}
		\item $\vert\vert a \vert\vert = x+y-z$
		\item the number of boundaries of $(x,y,z)$ is equal to $\gcd(x, y+z) + \gcd(y, z+x) + \gcd(z, x+y)$.
		(Here we use the convention that $\gcd(0,w) = |w|$)
	\end{enumerate}
\end{lem}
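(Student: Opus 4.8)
The plan is to handle the two statements separately: the first is essentially a reformulation of the given description of the fibered face $\Delta$, while the second comes down to a computation with the long exact sequence of the pair $(M,\partial M)$ together with the linking data of the magic link.

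For part (1) I would use only that the Thurston norm $x$ is homogeneous along rays through the origin and that, by hypothesis, the face $\Delta$ lies in the affine hyperplane $\{x+y-z=1\}$ and consists of unit vectors for $x$. Thus on $\Delta$ the linear functional $\ell(x,y,z)=x+y-z$ agrees with $x$, and writing a class $a=(x,y,z)\in C(\Delta)$ as $a=tv$ with $t>0$, $v\in\Delta$, homogeneity gives $x(a)=t\,x(v)=t\,\ell(v)=\ell(a)=x+y-z$. (For a primitive integral such $a$ one also has $x(a)=-\chi(F_a)$, which is how the identity is used later.)

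For part (2) I would set $\partial M=T_1\sqcup T_2\sqcup T_3$, with $T_i$ the boundary torus around the $i$-th component $K_i$ of the chain link, and let $\mu_i,\lambda_i\in H_1(T_i;\Z)$ be the meridian and the $0$-framed longitude. Since the tuple is primitive the fiber $F=F_a$ is connected, and $F\cap T_i=\partial F\cap T_i$ is a disjoint union of $d_i$ parallel, coherently oriented essential simple closed curves (empty in the degenerate case), where $d_i\ge 0$ is the divisibility of $[\partial F\cap T_i]\in H_1(T_i;\Z)$; hence the number of boundary components of $F$ is $d_1+d_2+d_3$, and the problem reduces to computing each $d_i$. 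I would do this via Poincar\'e--Lefschetz duality $H_2(M,\partial M;\Z)\cong H^1(M;\Z)\cong\Hom(H_1(M;\Z),\Z)$, under which $a=(x,y,z)$ is the homomorphism sending the meridian basis $\mu_1,\mu_2,\mu_3$ of $H_1(M;\Z)\cong\Z^3$ to $x,y,z$. Because $F$ meets $\partial M$ in a collar $\partial F\times[0,1)$, one has the intersection identity $[\partial F\cap T_i]\cdot_{T_i}\gamma=\langle a,\gamma\rangle$ for every $\gamma\in H_1(T_i;\Z)$, where on the right $\gamma$ is regarded as a class in $H_1(M;\Z)$ via inclusion. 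Evaluating at $\gamma=\mu_i$ and $\gamma=\lambda_i$, and using $\lambda_i=\sum_{j\ne i}\operatorname{lk}(K_i,K_j)\,\mu_j$ in $H_1(M;\Z)$ (a punctured Seifert surface for $K_i$), one solves a $2\times 2$ linear system to obtain $[\partial F\cap T_i]=\bigl(\sum_{j\ne i}\operatorname{lk}(K_i,K_j)x_j\bigr)\mu_i-x_i\lambda_i$, so $d_i=\gcd\bigl(x_i,\ \sum_{j\ne i}\operatorname{lk}(K_i,K_j)x_j\bigr)$.

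It then remains to specialize to the magic link: its three components form a closed $3$-chain, so the pairwise linking numbers are all $\pm1$, and choosing the orientations of the $K_i$ to make each equal to $+1$ turns the three expressions into $\gcd(x,y+z)$, $\gcd(y,z+x)$, $\gcd(z,x+y)$, with the conventions $\gcd(0,w)=|w|$ and $\gcd(0,0)=0$ absorbing the degenerate cases. Summing yields the formula. The step I expect to be most delicate is precisely this last one: pinning down the linking matrix of the magic link with a consistent set of orientations (so that it is genuinely the off-diagonal all-ones matrix and not a version requiring a change of basis), together with the bookkeeping that for a fibered class the number of boundary curves on each $T_i$ really equals the divisibility of $[\partial F\cap T_i]$ (connectedness of $F$ from primitivity, coherence of the boundary orientations, and the collar identity above). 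Once the conventions are fixed, the homological computation itself is routine.
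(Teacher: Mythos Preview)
The paper does not give its own proof of this lemma---it is cited from Kin---but it does prove the natural generalization to $n$-chain links (Lemma~\ref{lem:gen_bdd_formula}), and that proof is the relevant comparison. Your argument is correct and ultimately computes the same thing: the class $[\partial F\cap T_i]\in H_1(T_i;\Z)$ and its divisibility. The difference is in how that class is obtained. The paper works directly with the connecting homomorphism $\partial_*:H_2(M,\partial M)\to H_1(\partial M)$ from the long exact sequence of the pair, noting by inspection that the twice-punctured disk $K_i$ has $\partial_*[K_i]=l_i-m_{i-1}-m_{i+1}$, and then sums linearly. You instead pass through Poincar\'e--Lefschetz duality and the intersection pairing on each $T_i$, recovering the boundary coordinates from the linking matrix of the chain. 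Your route is a bit more abstract but makes the dependence on linking numbers completely explicit (so it generalizes immediately to any link complement once the linking matrix is known); the paper's route is quicker here because the boundary of each Seifert disk is visibly one longitude plus two meridians of the neighbors. For part~(1) your argument via homogeneity and the supporting hyperplane is exactly what the paper uses elsewhere (Lemma~\ref{lem:eqn_of_fib_face} and Corollary~\ref{cor:sub_facets_in_C(n,0)}).
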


By using the above lemma we can determine the topological type of each fibered class just by knowing its coordinates in $H_2(M,\partial M, \mathbb{Z})$.
Note that the normalized entropy is constant on each ray in a fibered face.  
Also, the entropy function restricted to a fibered face is strictly convex and diverges to $\infty$ towards the boundary.
Using these information, we compute upper bounds for $L(k,g,n)$ in some special cases.
\begin{lem}
	$L(2, g, 3) \asymp 1/g \asymp L(2, g, 4)$. 
\end{lem}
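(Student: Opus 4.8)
The plan is to exploit the explicit description of the fibered cone $C(\Delta)$ of the magic manifold together with Kin's formula (Lemma \ref{lem:bdd_formula}) to produce, for each $g$, an explicit sequence of primitive integral classes in $C(\Delta)$ whose minimal representatives are surfaces $S_{g,3}$ (resp.\ $S_{g,4}$) with monodromy fixing a $2$-dimensional homology subspace, and then to bound the entropy of these monodromies from above using the convexity/continuity of the normalized entropy function on the fibered face. Combined with the lower bound $L(k,g,n) \gtrsim \frac{k+1}{|\chi(S_{g,n})|}$ from Section \ref{subsec:lowerbdd} (which here, with $k=2$ and $|\chi(S_{g,3})| = 2g+1$, gives $L(2,g,3) \gtrsim 1/g$, and similarly $L(2,g,4) \gtrsim 1/g$), it suffices to prove the matching upper bound $L(2,g,n) \lesssim 1/g$ for $n \in \{3,4\}$.

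First I would pin down the topological type of fibers along a well-chosen ray. Starting from the known fiber $a = (1,1,0)$ with $\|a\| = 2$ and $b_1(M) = 3$ (so $\kappa = 2$ automatically for \emph{every} fiber), I would look at classes of the form $(p, q, r)$ with $p+q-r$ growing linearly; by Lemma \ref{lem:bdd_formula}, $-\chi$ of the minimal representative is $p+q-r$ and the number of boundary components is $\gcd(p,q+r)+\gcd(q,r+x)+\gcd(r,p+q)$. Choosing the arithmetic progression so that exactly three (for $n=3$) or four (for $n=4$) boundary components appear and so that $-\chi = 2g-2+n$ forces genus $g$, I get a sequence $c_g \in C(\Delta)$ of fibered classes representing $S_{g,3}$ (resp.\ $S_{g,4}$). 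The key point is that these classes all lie on (or projectively converge to) a \emph{single ray} through the interior of $\Delta$, or at least stay in a fixed compact subset of the open face bounded away from $\partial\Delta$.

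Next, for the entropy estimate: Fried's theorem says the normalized entropy $\overline{h}(\cdot) = |\chi(\cdot)| \cdot h(\cdot)$ is constant on rays and extends to a continuous (indeed convex) function on the open fibered face $\Delta$, blowing up at $\partial \Delta$. Hence on any compact subset $K \subset \operatorname{int}(\Delta)$ containing the projectivizations of all the $c_g$, there is a uniform constant $E_K$ with $\overline{h}(c_g) \le E_K$. Therefore $h(c_g) = \overline{h}(c_g)/|\chi(S_{g,n})| \le E_K/(2g-2+n) \lesssim 1/g$. Since the monodromy $\varphi_g$ of $c_g$ is a pseudo-Anosov map on $S_{g,n}$ with $\kappa(\varphi_g) = 2$, this gives $L(2,g,n) \le h(\varphi_g) \lesssim 1/g$, completing the argument for both $n=3$ and $n=4$. (For the $n=4$ case one may alternatively note that some $c_g$ already has four boundary components, or pass to a suitable cover / different sub-ray of the same face.)

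The main obstacle I anticipate is the bookkeeping in the first step: choosing the arithmetic progression of integer triples in $C(\Delta)$ so that simultaneously (i) each lies in the open cone (the inequalities $x>0$, $y>0$, $x>z$, $y>z$ hold), (ii) the gcd formula yields \emph{exactly} $3$ (resp.\ $4$) boundary components for all large $g$ rather than a fluctuating number, and (iii) the genus computed from $-\chi = x+y-z$ and the boundary count runs through all sufficiently large integers. Controlling the gcd terms along a progression is the delicate part — one typically wants a direction vector whose coordinates are coprime to the relevant sums, so that two of the three gcd's are $1$ and the third is controlled; verifying that such a direction exists inside $\Delta$ and that the resulting projective points stay in a fixed compact subset of $\operatorname{int}(\Delta)$ is where the real care is needed. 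Everything after that is a direct application of Kojima--McShane (for the lower bound) and Fried's convexity theorem (for the upper bound).
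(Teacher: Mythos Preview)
Your proposal is correct and follows essentially the same approach as the paper's proof: produce explicit primitive integral classes in the magic manifold's fibered cone whose minimal representatives are $S_{g,3}$ (resp.\ $S_{g,4}$), observe that $\kappa=2$ automatically since $b_1(M)=3$, and bound the entropy via Fried's continuity/convexity on a compact subset of the open face, combined with the Kojima--McShane lower bound. The paper carries out precisely the bookkeeping step you flag as the main obstacle: it works near the ray through $(4,4,2)$ (rather than $(1,1,0)$) and writes down four explicit arithmetic progressions for each of $n=3$ and $n=4$ --- e.g.\ $(4k+1,4k+1,2k+1)$, $(4k,4k,2k+1)$, etc.\ --- checking via Lemma~\ref{lem:bdd_formula} that the three gcd's are each $1$ (so $3$ boundaries) and that the genera cover all sufficiently large integers, with minor congruence conditions on $k$ handled by overlapping the progressions.
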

\begin{proof}
	We choose a point near the ray $R$ which passes through $(4,4,2)$ and the origin. 
	It obviously passes through the fibered face $\Delta := \{(x, y, z) ~|~ x+y-z = 1, x>0, y>0, x>z, y>z\}$.
	Now, to get surfaces with $3$ boundary components, we choose the following sequences.
	\begin{itemize}
		\item $(4k+1, 4k+1, 2k+1)$ has $3$ boundaries and the number of genera is $3k$.
		\item $(4k, 4k, 2k+1)$ has $3$ boundaries and the number of genera is $3k-1$.
		\item $(4k-1, 4k-1, 2k+1)$ has $3$ boundaries and the number of genera is $3k-2$ if $k \neq 1 \mod 3$.
		\item $(4k+1, 4k+1, 2k-1)$ has $3$ boundaries and the number of genera is $3k+1$ if $k \neq 2 \mod 3$.
	\end{itemize}
	These sequences cover all cases for $g > 1$ and $n = 3$.
	For surfaces with $4$ boundaries, we choose the following sequences instead.
	\begin{itemize}
		\item $(4k+1, 4k+1, 2k)$ has $4$ boundaries and the number of genera is $3k$.
		\item $(4k-1, 4k-1, 2k)$ has $4$ boundaries and the number of genera is $3k-2$.
		\item $(4k+3, 4k+3, 2k)$ has $4$ boundaries and the number of genera is $3k+2$ if $k \neq 0 \mod 3$.
		\item $(4k-3, 4k-3, 2k)$ has $4$ boundaries and the number of genera is $3k-4$ if $k \neq 0 \mod 3$.
	\end{itemize}
	These sequences cover all cases for $g $ and $n = 4 $.
	Since the ray $R$ is in the interior of the fibered face and all the sequences of points we chose converge to $R$, the normalized entropy of such tuples approaches the one of the ray and is hence bounded.
	Since this upper bound is also a lower bound by the content of section \ref{subsec:lowerbdd}, we get
	$L(2,g,3)\asymp 1/g \asymp L(2, g, 4) $.
	
\end{proof}

\begin{lem}
	$L(2,0,n) \asymp 1/n$.
\end{lem}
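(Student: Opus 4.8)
The lower bound $L(2,0,n)\gtrsim 1/n$ is immediate: it is the case $k=2$, $g=0$ of the general lower bound established in \S\ref{subsec:lowerbdd}, since $|\chi(S_{0,n})|=n-2\asymp n$. So the real content is the upper bound $L(2,0,n)\lesssim 1/n$, and the plan is to realize the required pseudo-Anosov maps as monodromies of genus-$0$ fibers of the magic manifold $M$, exactly in the spirit of the previous lemma. Throughout, $\Delta=\{(x,y,z)\mid x+y-z=1,\ x>0,\ y>0,\ x>z,\ y>z\}$ is the fixed fibered face, and I use that the normalized entropy $a\mapsto \|a\|\,h(\varphi_a)$ descends to a continuous (convex) function on $\Delta$ which blows up at $\partial\Delta$, so it is bounded on any compact subset of the open face.

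First I would locate a $1$-parameter family of genus-$0$ fibers. Consider the primitive integral classes $a_{p,q}=(p,q,0)$ with $p,q\geq 1$ and $\gcd(p,q)=1$. By Lemma \ref{lem:bdd_formula} such a class has norm $\|a_{p,q}\|=p+q$, and its fiber $S_{p,q}$ has $\gcd(p,q)+\gcd(q,p)+\gcd(0,p+q)=p+q+2$ boundary components; since $2g-2+(\#\text{boundaries})=-\chi(S_{p,q})=\|a_{p,q}\|=p+q$, the fiber has genus $0$, i.e. $S_{p,q}\cong S_{0,\,p+q+2}$. The normalized point $(\tfrac{p}{p+q},\tfrac{q}{p+q},0)$ satisfies all four strict inequalities defining $\Delta$, so $a_{p,q}$ lies in the open fibered cone and its monodromy $\varphi_{p,q}$ is pseudo-Anosov; because $b_1(M)=3$, the Mayer--Vietoris computation recalled in \S\ref{sec:prelim} gives $\kappa(\varphi_{p,q})=b_1(M)-1=2$. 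As $p+q$ runs over all integers $\geq 2$, the number of punctures $n=p+q+2$ runs over all integers $\geq 4$, so for every $n\geq 4$ we get a pseudo-Anosov $\varphi$ on $S_{0,n}$ with $\kappa(\varphi)\geq 2$.

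It remains to bound $h(\varphi_{p,q})$, and for this I would choose $(p,q)$ as balanced as possible. Writing $N=n-2$, take $(p,q)=(\tfrac{N-1}{2},\tfrac{N+1}{2})$ if $N$ is odd, $(p,q)=(\tfrac N2-1,\tfrac N2+1)$ if $N\equiv 0\pmod 4$, and $(p,q)=(\tfrac N2-2,\tfrac N2+2)$ if $N\equiv 2\pmod 4$; a direct check shows $\gcd(p,q)=1$ and $\min(p,q)\geq N/6$ in all cases, so the projectivized classes $\widehat{a_{p,q}}=(\tfrac pN,\tfrac qN,0)$ all lie in the compact set $K=\{(x,1-x,0)\mid \tfrac16\leq x\leq\tfrac56\}\subset\Delta$. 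Hence $\|a_{p,q}\|\,h(\varphi_{p,q})$ is bounded on this family by a constant $C=C(M)$, giving $h(\varphi_{p,q})\leq C/\|a_{p,q}\|=C/(n-2)$, so $L(2,0,n)\leq C/(n-2)\lesssim 1/n$. Together with the lower bound this yields $L(2,0,n)\asymp 1/n$.

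The computations are routine once set up, and this lemma is genuinely easy given the machinery of \S\ref{sec:magic3mfld}; the only point needing a little care — and the step I expect to be the (minor) main obstacle — is the simultaneous bookkeeping of the two constraints on $(p,q)$: primitivity of the class forces $\gcd(p,q)=1$, while keeping $\widehat{a_{p,q}}$ inside a \emph{fixed} compact subset of the open face (away from the boundary, where the normalized entropy is infinite) forces $p$ and $q$ to stay comparable. The parity split above reconciles the two, and one should additionally verify the finitely many smallest values of $n$ by hand.
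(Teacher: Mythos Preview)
Your argument is correct and follows essentially the same route as the paper: both produce genus-$0$ fibers of the magic manifold of the form $(p,q,0)$ with $\gcd(p,q)=1$, whose projectivizations stay in a compact subset of $\Delta$ near the ray through $(1,1,0)$, and then invoke Fried's continuity of normalized entropy together with the general lower bound. The only difference is bookkeeping: the paper parametrizes by the four explicit sequences $(2k-1,2k,0)$, $(2k-1,2k+1,0)$, $(2k,2k+1,0)$, $(2k-1,2k+3,0)$ to hit every $n>4$, whereas you parametrize by a parity split on $N=n-2$ to achieve the same coverage.
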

\begin{proof}
	We choose the following sequences, so that 
	they all lie around the ray which passes $(1,1,0)$ from the origin.
	\begin{itemize}
		\item $(2k-1, 2k, 0)$ has $4k+1$ boundaries and no genus.
		\item $(2k-1, 2k+1, 0)$ has $4k+2$ boundaries and no genus.
		\item $(2k, 2k+1, 0)$ has $4k+3$ boundaries and no genus.
		\item $(2k-1, 2k+3, 0)$ has $4k+4$ boundaries and no genus.
	\end{itemize}
	These sequences cover all $n>4$. Here again upper bound from sequences and lower bound from section \ref{subsec:lowerbdd} coincide. Thus $L(2,0,n) \asymp 1/n$.
\end{proof}

\section{The $n$ chained link complements and their fibered face}\label{sec:nchainlink}
In this section we generalize the techniques used for the magic $3$-manifold to study sequences of fibers in more general link complements. 
We will concentrate on specific $n$-chained link complement with $2$ half-twists, denoted by $C(n)$.
We will first show that $C(n)$ is fibered and find a fibered face $\mathcal{F}$. Then we will compute the stretch factor of a specific fiber lying in $\mathcal{F}$.

\subsection{The $n$ chained links $C(n)$ are fibered}
\begin{figure}
    \centering
    \includegraphics[width=.5\textwidth]{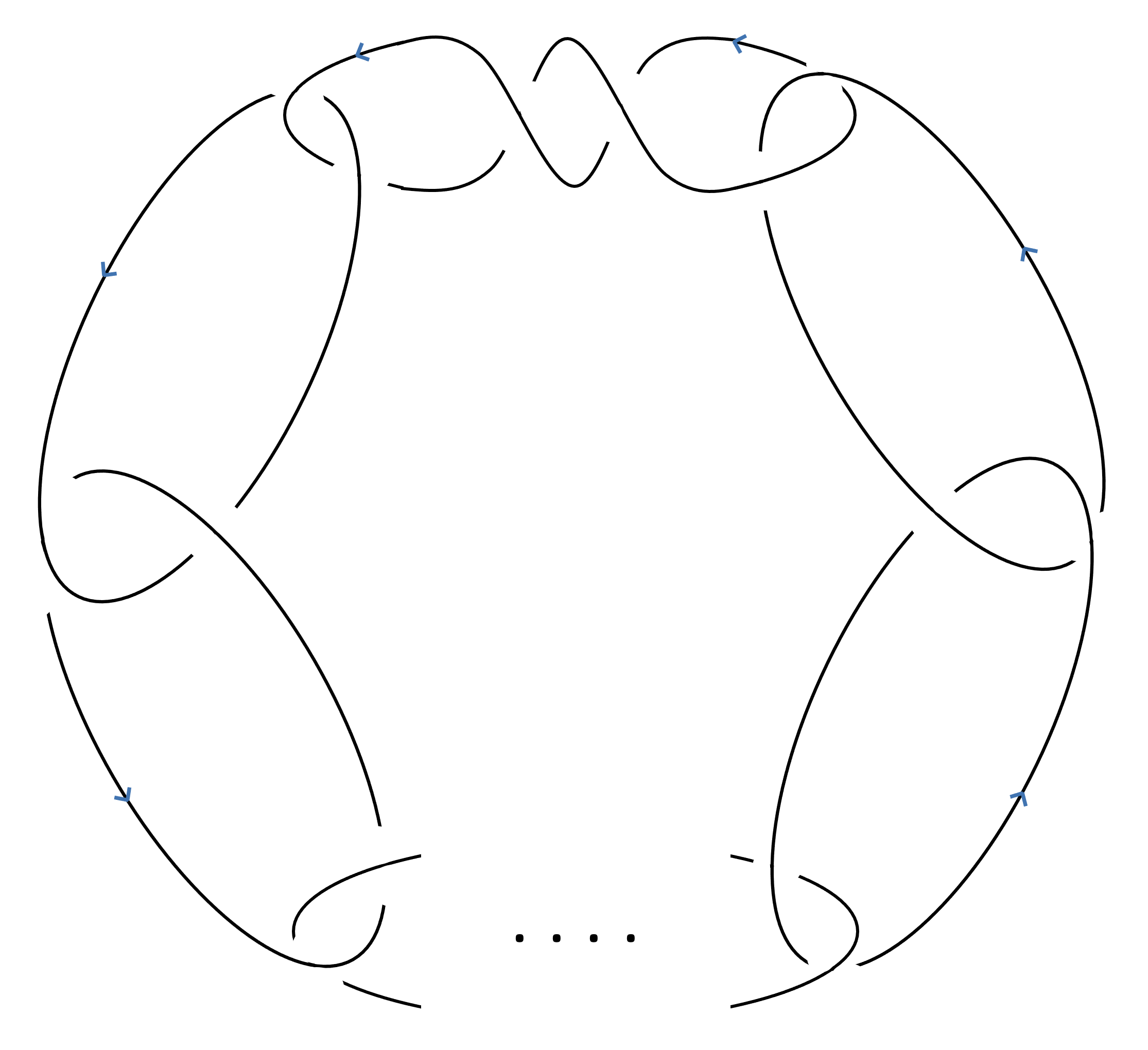}
    \caption{The link $C(n)$}
    \label{fig:C(n)}
\end{figure}
We fist define the family of links we will work with.
Recall that an alternating link is a link which admits a diagram which is alternating.
\begin{defn}[$n$ chained link $C(n)$]
	$C(n)$ is a link composed of $n$ components which are circularly linked together with claps of the same type, and with $2$ half twists on one of its links.
	We choose the direction of these half twists so that $C(n)$ is non-alternating.
	\emph{i.e.,} $C(n)$ is not an alternating link.
	See figure \ref{fig:C(n)}.
\end{defn}

\begin{rmk}
    To clarify the definition of a clasp, we borrow the wording of Leininger in  \cite{leininger2002surgeries}. 
    A clasp is defined to be a pair of crossings where the two ends of adjacent components are linked. 
    \begin{figure}[h]
    	\centering
    	\includegraphics[scale = 0.4]{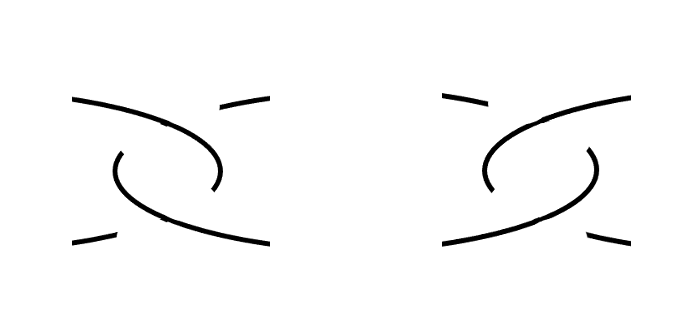}
    	\caption{The two different types of clasps}
    	\label{fig:clasps}
    \end{figure}
	There are exactly $2$ kinds of clasps, as illustrated in figure \ref{fig:clasps}.
	In our case, we will require $C(n)$ to have all clasps looking like the right clasp of figure \ref{fig:clasps}.
	We also give an orientation on the clasp, and it induces the orientation on each component in $C(n)$.
	Note that the half twists can be resolved by an appropriate isotopy, but clasps might change their shape in the process.
	See figure \ref{fig:isotopy_of_twist}. 
\end{rmk}
\begin{figure}[h]
    \centering
    \includegraphics[width = 0.7\textwidth]{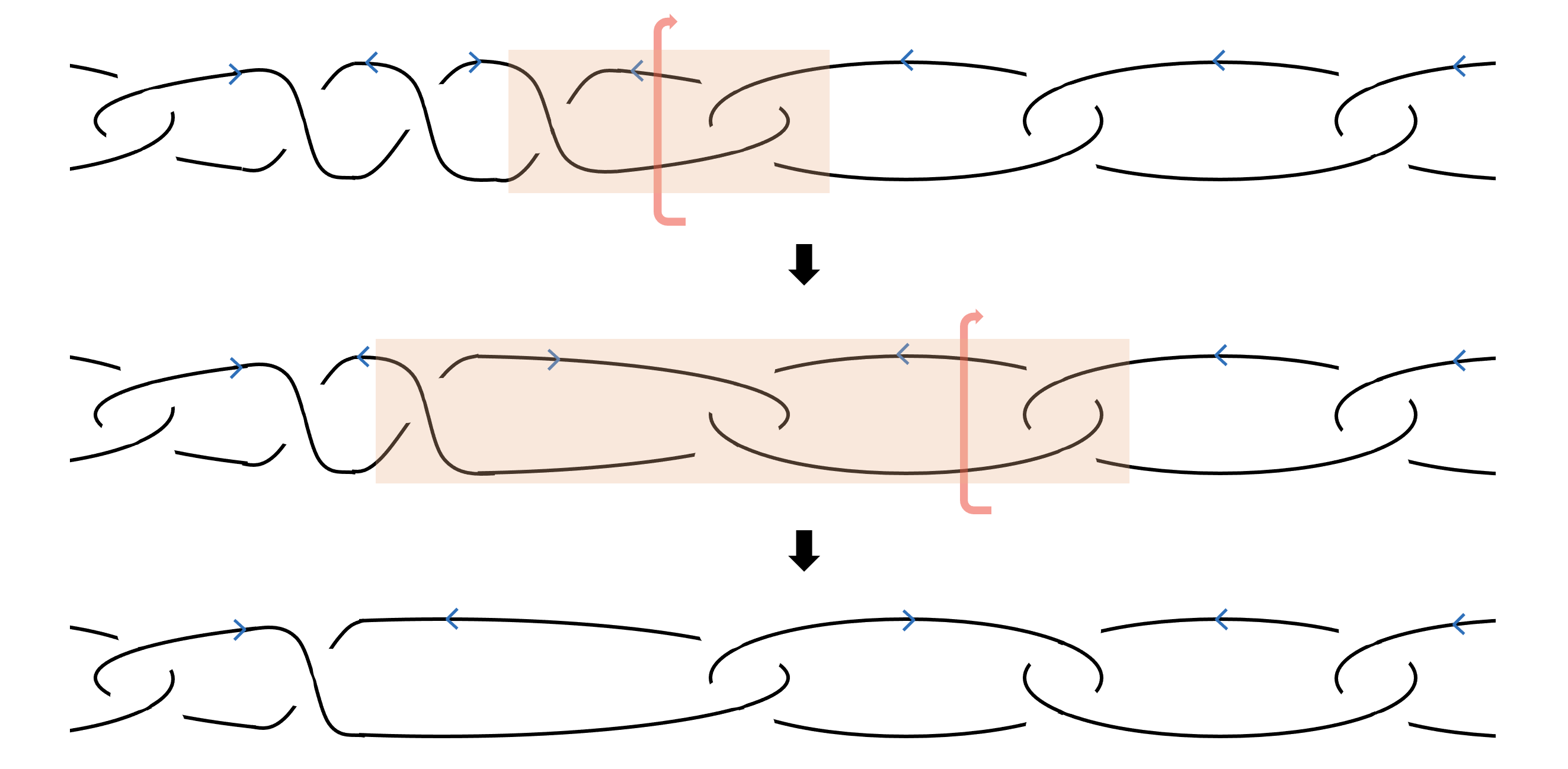}
    \caption{Half twists can be resolved, but the shape of some clasps might change.}
    \label{fig:isotopy_of_twist}
\end{figure}

Let $M(n)$ be the complement of a small enough neighborhood $\mathcal{N}(C(n))$ of $C(n)$.
Note that $M(n)$ is a 3-manifold with boundary and $\partial M = \partial \mathcal{N}(C(n))$ is a disjoint union of $n$ tori.
Also, since the first Betti number of $M$ is $n$, every monodromy map of its fiber fixes a homological subspace of dimension $n-1$.
In \cite{neumann2011arithmetic}, Neumann and Reid prove that, when $n \geq 4$, $M(n)$  is hyperbolic. Leininger \cite{leininger2002surgeries} then showed that it is fibered, by finding a specific fiber.
	
\begin{lem}[Lemma 4.2 in \cite{leininger2002surgeries}]\label{lem:Leininger_fiber}
	$C(n)$ is a fibered link and thus $M(n)$ is a fibered $3$-manifold.
\end{lem}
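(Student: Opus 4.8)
The plan is to exhibit an explicit Seifert surface for $C(n)$ and show it is a fiber, following Leininger's strategy for the family of chain links. First I would apply Seifert's algorithm (or Murasugi's criterion) to a convenient diagram of $C(n)$. Since $C(n)$ differs from the standard alternating $n$-chain link only by two half-twists concentrated on one component, I would work with the diagram in which those half-twists are resolved via the isotopy of Figure \ref{fig:isotopy_of_twist}, keeping track of how the clasps deform. Applying Seifert's algorithm to this diagram produces a spanning surface $S$ built from Seifert disks joined by twisted bands at the crossings; a direct count of Seifert circles and crossings gives $\chi(S)$, and one checks this surface is connected and has the correct number of boundary components (one per link component, so $n$).

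Next I would verify that $S$ realizes the Thurston norm / is genus-minimizing, and more strongly that it is a fiber. The cleanest route is to express $S$ as an iterated Murasugi sum (plumbing) of standard fiber surfaces: each clasp contributes a Hopf band and the twisted region contributes a band whose core is a fiber surface for a $(2,k)$-torus link. Since a Murasugi sum of fiber surfaces is a fiber surface (Gabai, Stallings), this immediately gives that $S$ is a fiber and hence $M(n)$ fibers over $S^1$ with fiber $S$. Alternatively, one can compute the Alexander polynomial of $C(n)$ from the Seifert matrix of $S$, check that it is monic of degree equal to the genus bound coming from $\chi(S)$, and invoke the fact that for the relevant class of links (e.g. alternating or more generally via the Stallings fibration criterion) monic Alexander polynomial of the right degree certifies fiberedness; combined with the plumbing description this pins down the fiber.

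The main obstacle I expect is bookkeeping: correctly drawing the diagram of $C(n)$ after resolving the half-twists, tracking how the two clasps near the twisted component change shape, and making sure the Murasugi-sum decomposition is valid (i.e. each summand is glued along a polygon that is a fiber surface in $S^3$, with the Hopf bands of consistent sign). Getting the signs right matters because it is precisely the non-alternating choice of half-twists that is being used, and a sign error would change the stretch-factor computation that follows in Section \ref{sec:stretch_factor}. Since the statement is quoted directly from \cite{leininger2002surgeries}, I would ultimately cite Leininger's argument for the $n$-chain link with half-twists and only indicate the plumbing picture, rather than redo the full Seifert-matrix computation here.
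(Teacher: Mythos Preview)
Your proposal is essentially the same approach as the paper's: apply Seifert's algorithm to a suitable diagram of $C(n)$, decompose the resulting surface as an iterated Murasugi sum of Hopf bands, and invoke Gabai's theorem that a Murasugi sum of fiber surfaces is a fiber surface. The paper's proof is exactly this, phrased as ``one horizontal Hopf band and $n$ vertical Hopf bands'' (so in particular the twisted region contributes a single Hopf band rather than a more general $(2,k)$-torus link annulus), and your Alexander-polynomial alternative is not needed.
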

We want to find a precise fibration of $M$ over the circle, which will then constitute a proof of Lemma \ref{lem:Leininger_fiber}. We need a few more tools before doing so.
First, we recall the definition of an operation on surfaces, called the Murasugi sum.
\begin{defn}[Murasugi sum, \cite{gabai1985murasugi}]
	The oriented surface $\Sigma \subset S^3$ is a Murasugi sum of two different oriented surfaces $\Sigma_1$ and $\Sigma_2$ if 
	\begin{enumerate}
		\item $\Sigma = \Sigma_1 \cup \Sigma_2$ and $\Sigma_1 \cap \Sigma_2 = D$, where $D$ is a $2n$-gon.
		\item There is a partition of $S^3$ into two $3$-balls $B_1, B_2$ satisfying 
		\begin{itemize}
			\item $\Sigma_i \subset B_i$ for $i = 1,2$.
			\item $B_1\cap B_2 = S^2$ and $\Sigma_i \cap S^2 = D$ for $i = 1,2$.
		\end{itemize}
	\end{enumerate} 
\end{defn}
This definition is most likely easier understood by a picture, so we refer to figure \ref{fig:murasugi_sum} for an example of a Murasugi sum.

\begin{figure}
    \centering
    \includegraphics[width = 0.8\textwidth]{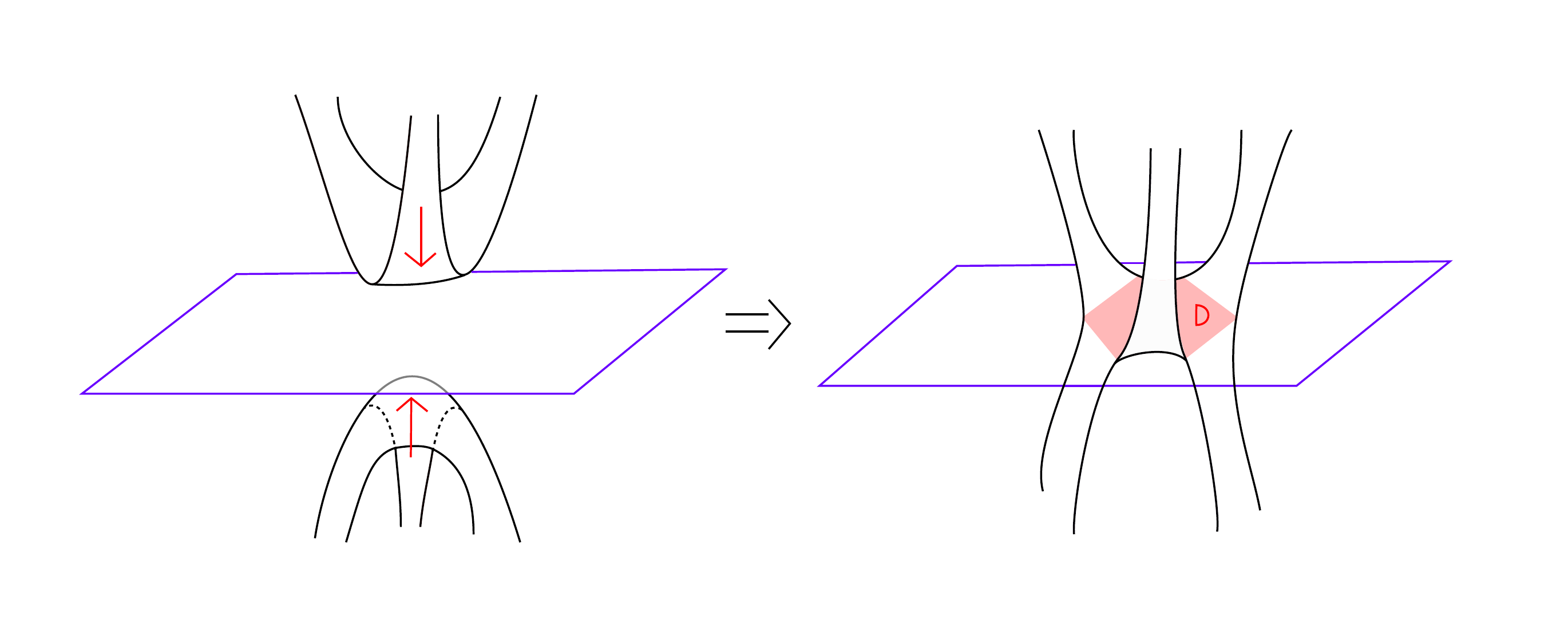}
    \caption{Murasugi sum of two surfaces, here $D$ is a hexagon}
    \label{fig:murasugi_sum}
\end{figure}

By the following theorem of Gabai (\cite{gabai1983murasugi}) we can detect fibers by showing that they are built from smaller fibers.

\begin{thm}[Gabai]\label{thm:murasugi_sum}
	Let $S$ be a Murasugi sum of $S_1$ and $S_2$.
	$S$ is a fiber surface if and only if both $S_1$ and $S_2$ are fiber surfaces.
\end{thm}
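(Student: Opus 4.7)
The plan is to follow Gabai's original approach via sutured manifold theory. To any Seifert surface $S \subset S^3$ for a link $L = \partial S$, one associates a sutured manifold structure on the complement $S^3 \setminus N(S)$, with $R_\pm$ the two copies of $S$ on the boundary and sutures running parallel to $L$. The foundational fact I would invoke is that $S$ is a fiber of a fibration $S^3 \setminus L \to S^1$ if and only if this sutured manifold is a product $S \times I$; this reduces the theorem to a statement about when the associated sutured manifold is a product.

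For the easy direction, that a Murasugi sum of two fiber surfaces is itself a fiber surface, I would argue by an explicit monodromy construction. If $\phi_1, \phi_2$ are monodromies of $S_1, S_2$, each supported in the interior, I would extend each $\phi_i$ by the identity to a self-homeomorphism of $S$ and verify that $\phi := \phi_1 \circ \phi_2$ is a monodromy for $S$. Geometrically, the verification amounts to gluing the two product structures on $B_i \setminus N(S_i)$ along the product disk $\mathcal{D} := D \times I$ sitting inside the separating sphere $B_1 \cap B_2$, using that $\mathcal{D}$ lies on this sphere and that the gluing is prescribed by the identification in the Murasugi sum.

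For the converse, I would observe that $\mathcal{D}$ is a \emph{product disk} in the sutured complement of $S$: its boundary alternates $2n$ times between arcs on $R_\pm$ and arcs on the sutures. Decomposing the sutured complement of $S$ along $\mathcal{D}$ then produces precisely the sutured complements of $S_1$ in $B_1$ and $S_2$ in $B_2$. The key lemma is that a sutured manifold is a product if and only if both pieces obtained by decomposing along a product disk are products; the nontrivial ``if'' direction follows because one may reverse the decomposition by regluing the two product structures $S_i \times I$ along $\mathcal{D}$ in a compatible way.

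The main obstacle I anticipate is the identification step in the converse: verifying that cutting the sutured complement of $S$ along $\mathcal{D}$ genuinely recovers the sutured complements of the two Murasugi summands, rather than some other decomposition of the ambient $3$-balls $B_1, B_2$. This requires matching the ``Murasugi'' viewpoint, in which each $S_i$ sits inside its ball $B_i \subset S^3$, with the ``sutured'' viewpoint, in which the cut pieces are considered intrinsically. Once this identification is established and the product-disk lemma is in hand, the biconditional follows formally.
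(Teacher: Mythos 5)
Your framework has to be measured against Gabai's original argument rather than anything in the paper: the paper does not prove this statement, it simply quotes it with a citation to \cite{gabai1983murasugi} (and quotes the accompanying monodromy statement, Corollary 1.4 of \cite{gabai1985murasugi}, separately). Within that comparison, your setup is the right one: the characterization of fiber surfaces by the complementary sutured manifold being a product, the decomposition of the complement of $S$ along the disk $\Delta = \overline{E\setminus D}$ sitting in the Murasugi sphere $E=\partial B_1=\partial B_2$, and the identification of the cut pieces with the complements of the summands (the ``obstacle'' you flag is real but minor, since the ball on the other side is absorbed along a disk and does not change the sutured homeomorphism type). Your treatment of the Stallings direction (both summands fibered implies the sum is fibered, by composing the extended monodromies, equivalently regluing the two product structures along $D\times I$) is also fine.

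The gap is in the converse, which is exactly the direction that is Gabai's contribution and the nontrivial content of the theorem. Two problems. First, $\Delta$ is not a product disk once the Murasugi sum is nontrivial: its boundary consists of $n$ arcs on $R_+$ and $n$ arcs on $R_-$ and crosses the sutures $2n\geq 4$ times (already for plumbing, where $D$ is a square), so a standard product-disk decomposition lemma cannot simply be quoted. Second, and more seriously, you have the hard direction of your ``key lemma'' backwards. The direction you actually argue (if both cut pieces are products then the whole is, by regluing) is just the Stallings direction restated. What the converse of the theorem requires is the other implication: if the complement of $S$ is a product $S\times I$, then cutting along $\Delta$ yields products. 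This is not formal. A priori $\Delta$ sits in the product structure in no particular position, and one must show it can be isotoped to a standard (``vertical'') position compatible with the product structure --- equivalently, that the splitting sphere $E$ can be isotoped to meet the fibration standardly --- using crucially that $\Delta$ caps off to the sphere $E=\Delta\cup D$ with $D\subset S$. That straightening argument is the heart of Gabai's proof, and your proposal contains no argument for it; as written, the implication ``$S$ fibered $\Rightarrow$ $S_1, S_2$ fibered'' is assumed rather than proved.
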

We remark that in the above theorem, not only is $S$ a fiber, but we can also construct the associated monodromy from the monodromies of $S_1$ and $S_2$.
In our case, the building block for making more complicated fibers using this process will be Hopf bands.
\begin{lem}[Hopf band is a fiber]\label{lem:hopf_band}
	A Hopf band is a fibered surface.
	Moreover, the monodromy of a positive (negative) Hopf band is the right-handed (left-handed) Dehn twists along its core curve.
\end{lem}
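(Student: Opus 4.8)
The plan is to give an explicit fibration of the complement of the Hopf link (the boundary of the Hopf band) over $S^1$ and then read off the monodromy directly. First I would set up coordinates: view $S^3$ as the unit sphere in $\C^2$ and present the positive Hopf link as the intersection $\{z_1 z_2 = 0\} \cap S^3$, two fibers of the Hopf fibration. The key classical fact I would invoke is the Milnor fibration for the singularity $f(z_1,z_2) = z_1 z_2$: the map $\phi \co S^3 \setminus f^{-1}(0) \to S^1$, $\phi(z_1,z_2) = f(z_1,z_2)/|f(z_1,z_2)|$, is a locally trivial fibration whose fibers are the interiors of surfaces bounded by the link. For $f = z_1 z_2$ the generic fiber is an annulus, and its compactification is precisely the positive Hopf band $H^+$ spanning the Hopf link. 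This establishes that $H^+$ is a fiber surface; the negative Hopf band $H^-$ is obtained by taking the mirror image (equivalently, the link $\{\bar z_1 z_2 = 0\}$ or reversing an orientation), so it is a fiber surface as well by the same argument applied to the mirrored fibration.

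The second step is to identify the monodromy. Since the fiber is an annulus, its mapping class group (fixing the boundary) is infinite cyclic, generated by the Dehn twist about the core curve; so the monodromy is $T_c^{\,m}$ for some $m \in \Z$, and the only thing to pin down is $m$. I would do this by a direct geometric inspection of the Milnor fibration: following the fibers $\phi^{-1}(e^{i\theta})$ as $\theta$ runs from $0$ to $2\pi$, one sees each fiber sweep around once, and the return map rotates the annulus by a full turn in the direction dictated by the sign of the twist in the band. A cleaner, self-contained way to get $m = \pm 1$ is to compute the Alexander polynomial / the action on $H_1$ of the fiber, or simply to note that the open book with annulus page and monodromy $T_c^{\,m}$ has as its binding-plus-page the $(2,m)$-torus link; for the Hopf link we need $|m| = 1$, with the sign $+$ for the positive band and $-$ for the negative band. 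This yields the claimed statement: the monodromy of $H^+$ is the right-handed Dehn twist $T_c$ about the core curve, and that of $H^-$ is the left-handed twist $T_c^{-1}$.

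Alternatively, and perhaps more in keeping with the surgical spirit of the surrounding section, I would phrase the whole argument combinatorially: build the open book abstractly by plumbing — a Hopf band is by definition an unknotted annulus with one full twist, so its complement is a once-punctured-torus times $S^1$ glued up, and one checks directly that abstract open book $(\text{annulus}, T_c^{\pm 1})$ has total space $S^3$ and binding the Hopf link. The equivalence of "is a fiber surface" with "arises as the page of an open book" is standard, so this also suffices.

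I expect the main obstacle to be purely expository rather than mathematical: the statement is classical (it is the base case of the plumbing calculus for fibered links, and underlies the whole Murasugi-sum machinery of Theorem \ref{thm:murasugi_sum}), so the real work is choosing a presentation that fixes the \emph{sign} convention — positive band $\leftrightarrow$ right-handed twist — consistently with the orientation conventions used elsewhere in the paper, and making sure the Milnor-fibration normalization matches. Once the orientations are fixed, verifying that the return map is exactly a single Dehn twist (not a higher power, and of the correct handedness) is the only point requiring care, and it is handled by either the torus-link identification or a one-line $H_1$ computation as above.
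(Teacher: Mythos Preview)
The paper does not actually supply a proof of this lemma: it is stated as a classical fact (the base case for the Murasugi-sum/plumbing calculus) and then used without further justification. Your proposal is mathematically correct and considerably more detailed than anything the paper provides; the Milnor-fibration model for $f(z_1,z_2)=z_1z_2$ is a standard and clean way to exhibit the fibration, and your two methods for pinning down the monodromy (mapping class group of the annulus is cyclic, then identify the power via the $(2,m)$ torus link or an $H_1$ computation) are both valid. In short, there is nothing to compare against in the paper, and your argument stands on its own.
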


We are now ready to recall the proof of the lemma \ref{lem:Leininger_fiber}.
\begin{proof}
	We will directly prove the lemma by constructing an explicit fibration with fiber $S$.
	The fiber $S$ is obtained from the Seifert algorithm applied to $C(n)$ equipped with a suitable orientation.
	See also the figure \ref{fig:FiberS}.
	
	\begin{figure}
        \centering
        \includegraphics[width = 0.7\textwidth]{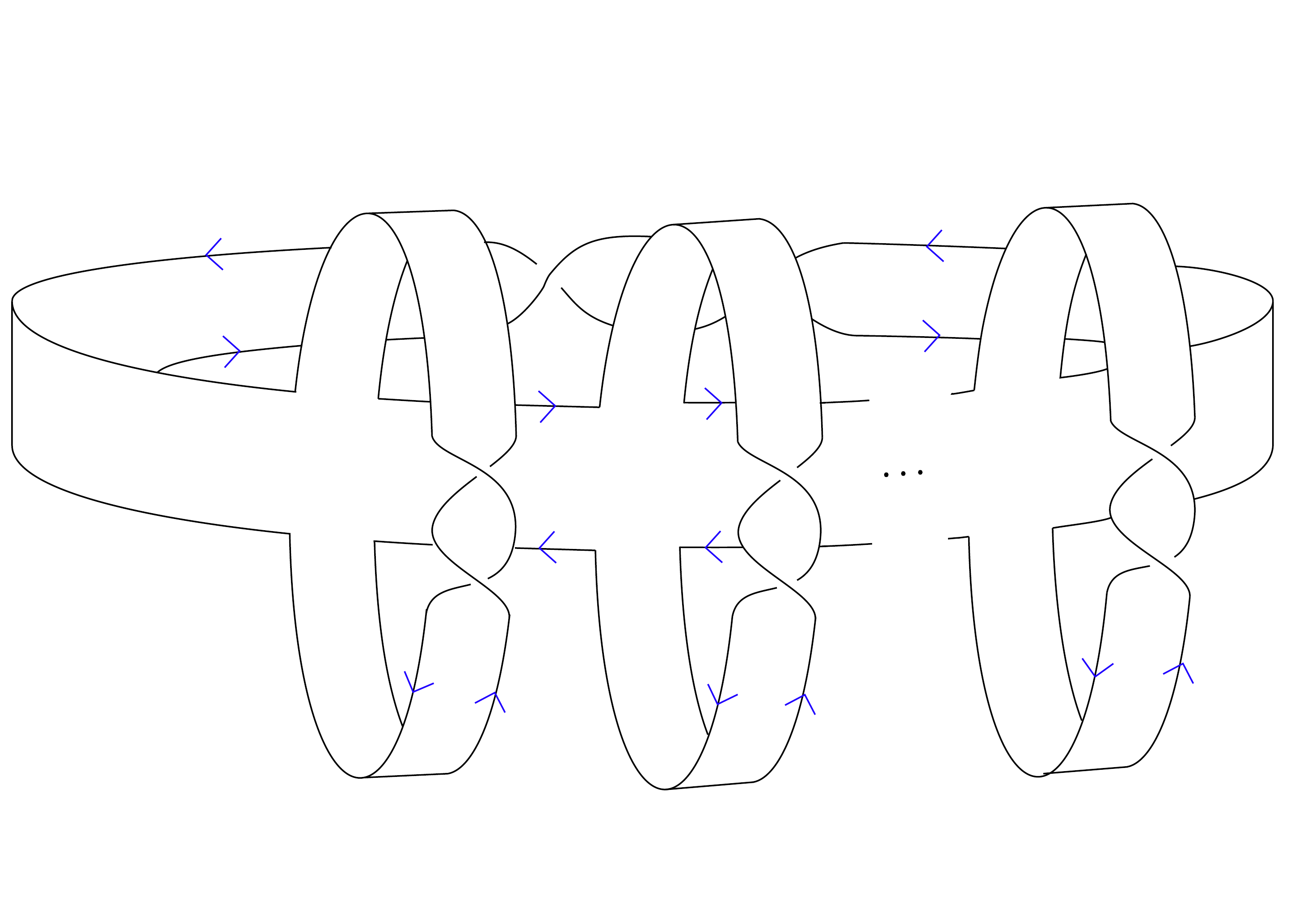}
        \caption{The fiber $S$ is a consecutive Murasugi sum of $n$ vertical bands on a horizontal band.}
        \label{fig:FiberS}
    \end{figure}
    
	Since the Seifert surface $S$ we obtain in this way is a consecutive Murasugi sum of one horizontal Hopf band and $n$ vertical Hopf bands, it is a fiber by Theorem \ref{thm:murasugi_sum} and Lemma \ref{lem:hopf_band}.
\end{proof}

Now, we focus on the homology of $M := M(n)$.
Consider that we draw $C(n)$ in such a way that the top link that has $2$ half-twists, as in the figure \ref{fig:C(n)}.
We then label the top link by $L_1$ and we enumerate the other link component in a clockwise fashion. 
\emph{i.e.,} The links components are labeled $L_1, \cdots, L_n$ in $C(n)$ with $2$ half twists in $L_1$.

Denote the twice punctured disk surrounded by the $i$'th link component by $K_i$. 
Note that the set $[K_i]_{1\leq i \leq n}$ forms a basis of $H_2(M, \partial M)$.
Then we have the following lemma.
\begin{lem}\label{lem:H2coord}
	The fiber $S$ has a coordinate $(1, \cdots, 1)$ with respect to the basis $[K_i]_{1\leq i \leq n}$.
	\emph{i.e.,} $[S] = [K_1] + \cdots + [K_n]$.
\end{lem}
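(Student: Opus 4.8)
The plan is to compute the homology class of the fiber $S$ by tracking how it is built as an iterated Murasugi sum of Hopf bands, as described in the proof of Lemma~\ref{lem:Leininger_fiber}, and to pair the result against the natural dual basis. First I would fix the basis $\{[K_i]\}_{1\le i\le n}$ of $H_2(M,\partial M;\Z)$, where $K_i$ is the twice-punctured disk bounded by the $i$-th link component $L_i$. Dual to this, in $H_1(M;\Z)$, is the basis of meridians $\{[m_i]\}$, where $m_i$ is a small meridian loop of $L_i$; the intersection pairing $H_2(M,\partial M;\Z)\times H_1(M;\Z)\to\Z$ satisfies $[K_i]\cdot[m_j]=\delta_{ij}$ (up to a consistent choice of orientations, which I would fix once and for all using the orientation of the clasps specified in the definition of $C(n)$). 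Consequently, to prove $[S]=[K_1]+\cdots+[K_n]$ it suffices to show that $[S]\cdot[m_j]=1$ for every $j$, i.e.\ that the Seifert surface $S$ meets each meridian $m_j$ algebraically once.

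The key step is then the following observation: the Seifert surface $S$ produced by Seifert's algorithm from a connected diagram of $C(n)$ is a Seifert surface for the whole link, so its algebraic intersection number with the meridian of each component $L_j$ equals the linking number of that meridian with $L_j$, which is $1$. More concretely, I would argue directly from the Murasugi sum decomposition in Figure~\ref{fig:FiberS}: $S$ is a horizontal Hopf band with $n$ vertical Hopf bands plumbed onto it, and the $j$-th vertical band is precisely the piece of $S$ that is "captured" by the component $L_j$, so $m_j$ punctures $S$ exactly once (transversally, with sign $+1$ by our orientation conventions). Since each $m_j$ meets $S$ once and the $[K_i]$ are characterized by $[K_i]\cdot[m_j]=\delta_{ij}$, writing $[S]=\sum_i c_i[K_i]$ and pairing with $[m_j]$ gives $c_j=1$ for all $j$, which is the claim.

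The main obstacle I anticipate is bookkeeping of orientations and signs: one must check that the clasp orientations chosen in the definition of $C(n)$, the orientation induced on $S$ by Seifert's algorithm, and the chosen co-orientations of the meridians are mutually compatible so that all intersection numbers come out $+1$ rather than $\pm 1$ with mixed signs. This is where the $2$ half-twists in $L_1$ could in principle cause trouble, so I would verify explicitly that resolving the half-twists (as in Figure~\ref{fig:isotopy_of_twist}) does not change the algebraic intersection of $S$ with $m_1$ — it changes the embedding but not the homology class, since $m_1$ still links $L_1$ once. A clean alternative that sidesteps most sign issues is to note that $[S]\in H_2(M,\partial M;\Z)$ is the class Poincaré–Lefschetz dual to the homomorphism $H_1(M;\Z)\to\Z$ defining the fibration $M\to S^1$, and that this homomorphism sends every meridian $m_j$ to $1$ because $S$ is a Seifert surface for $C(n)$ with all components coherently oriented; expressing this dual class in the $[K_i]$ basis then immediately yields $(1,\dots,1)$. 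Either way, the mathematical content is light — it is the linking-number computation $\mathrm{lk}(m_j, C(n)) = 1$ — and the write-up is mostly a careful orientation audit.
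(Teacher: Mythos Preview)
Your argument is correct, and it takes a different route from the paper's. The paper does not use the intersection pairing $H_2(M,\partial M)\times H_1(M)\to\Z$ at all; instead it works with the boundary map $\partial_*\colon H_2(M,\partial M)\to H_1(\partial M)$ from the long exact sequence of the pair. It checks that $\partial_*\bigl(\sum_i[K_i]\bigr)=\partial_*[S]$ (both are the class of $\partial S$ on the boundary tori) and then proves $\partial_*$ is injective via a rank count: from the long exact sequence and the fact that $i_*\colon H_1(\partial M)\to H_1(M)$ is surjective onto $\Z^n$, one extracts a split short exact sequence forcing the image of $j_*\colon H_2(M)\to H_2(M,\partial M)$ to vanish. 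Your approach is more direct and geometric---one linking-number computation against the dual basis of meridians---and avoids the exact-sequence manipulation entirely; the paper's approach, on the other hand, makes no appeal to Poincar\'e--Lefschetz duality and keeps everything inside the long exact sequence of the pair. One small wording slip in your write-up: the intersection number $[S]\cdot[m_j]$ equals the linking number of $m_j$ with the \emph{entire} link $C(n)$, not just with $L_j$; the conclusion is the same since $\mathrm{lk}(m_j,L_i)=\delta_{ij}$, but you should phrase it that way.
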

\begin{proof}
	The boundary map $\partial_* : H_2(M, \partial M) \to H_1(\partial M)$ sends $(1,\cdots,1)$  to the boundary of the Seifert surface.
	Therefore it suffices to show that $\partial_*$ is injective.
	
	In the long exact sequence for $(M, \partial M)$, the map right after the boundary map is induced by the inclusion $i : \partial M \to M$.
	Only the meridians survive under $i_*$ and they form a basis of $H_1(M)$.
	Hence $i_*$ is surjective.
	Now we will cut out the following short exact sequence from the long exact sequence, 
	\[
	0\to H_2(M, \partial M) / \text{image of } j_* \to H_1(\partial M) \to H_1(M) \to 0
	\]
	where $j : (M, *) \hookrightarrow (M, \partial M)$ is an inclusion.
	Since $H_1(M) \cong \mathbb{Z}^n$, it splits.
	Thus $\mathbb{Z}^{2n} \cong H_1(\partial M) = H_1(M) \oplus H_2(M, \partial M) / \text{image of } j_* \cong \mathbb{Z}^n \oplus \mathbb{Z}^n / \text{image of } j_*$.
	Therefore, $j_*$ is a zero map.
	In conclusion, $\partial_*$ is injective and the fiber $S$ has coordinates $(1 ,\cdots, 1)$ in the basis $[K_i]_{1\leq i \leq n}$.
\end{proof}
Note that the fiber $S$ is a genus $1$ surface with $n$ boundaries and so its Euler characteristic is equal to $n$.
From now on, let $\mathcal{F}$ be the fibered face of $M := M(n)$ which contains $S$.

We first observe the following lemma.
\begin{lem}\label{lem:eqn_of_fib_face}
	Let $M = M(n)$ and suppose that $a_1, \cdots, a_n$ are $n$ linearly independent points in $H_2(M,\partial M)$, all of which have Thurston norm equal to $1$.
	Let $\sigma$ be the $(n-1)$-dimensional simplex obtained by taking the convex hull of $a_1, \cdots, a_n$. Then, if there exists a point $a$ in interior of $\sigma$ whose Thurston norm $x(a)$ is equal to $1$, $\sigma$ is a subset of a fibered face $\mathcal{F}$ of $M$.
	Moreover the hyperplane supporting the fibered face $\mathcal{F}$ is simply the hyperplane passing through $a_1, \cdots, a_n$.
\end{lem}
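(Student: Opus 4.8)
The key input is the convexity of the Thurston norm and the structure of its unit ball as a polyhedron (Thurston \cite{thurston1986norm}), together with the fact that fibered faces are top-dimensional faces. The plan is to show that the point $a$ in the interior of $\sigma$, being a unit-norm point lying on the segment structure spanned by unit-norm points $a_1,\dots,a_n$, must sit on a top-dimensional face of the unit ball $B$, and then to identify that face with (a face containing) $\sigma$ and finally argue it is fibered.

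First I would recall that the unit ball $B = \{x(\cdot) \leq 1\}$ is a convex polytope and that its boundary $\partial B = \{x(\cdot) = 1\}$ consists of the union of its proper faces. Since $a_1,\dots,a_n$ all have Thurston norm $1$, they lie on $\partial B$; since they are linearly independent in the $n$-dimensional space $H_2(M,\partial M;\R)$, the simplex $\sigma = \mathrm{conv}(a_1,\dots,a_n)$ is $(n-1)$-dimensional and does not contain the origin. Now take the point $a$ in the relative interior of $\sigma$ with $x(a)=1$. By convexity of $x$, for every point $p \in \sigma$ we have $x(p) \leq 1$ (it is a convex combination of the $a_i$), so $\sigma \subset B$. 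On the other hand, $a$ lies in $\partial B$ and in the relative interior of $\sigma$; I claim this forces $x \equiv 1$ on all of $\sigma$. Indeed, if some $p \in \sigma$ had $x(p) < 1$, then since $a$ is in the relative interior of $\sigma$ we could write $a$ as a convex combination $a = t p + (1-t) q$ with $q \in \sigma$, $t \in (0,1)$, and strict convexity of the norm along the segment — or rather, using $x(q) \leq 1$ and $x(p) < 1$ — would give $x(a) < 1$, a contradiction. Hence $x \equiv 1$ on $\sigma$.

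Next, since $x$ is linear on the cone over each facet of $B$ and $\sigma$ is an $(n-1)$-dimensional (hence top-dimensional, as $\dim H_2 = n$ means facets have dimension $n-1$) subset of $\partial B$ on which $x \equiv 1$, the affine hyperplane $H$ spanned by $a_1,\dots,a_n$ is a supporting hyperplane of $B$: every point of $B$ satisfies $x \leq 1$, the linear functional cutting out $H$ equals $x$ on $\sigma$ hence (by linearity on the cone) on the whole facet, and $\sigma$ lies in the facet $\mathcal{F} := B \cap H$. This is a top-dimensional face of $B$. Finally, to see $\mathcal{F}$ is a fibered face: by hypothesis $\sigma \subset \mathcal{F}$ contains a point representing the fiber $S$ — wait, more precisely, the lemma will be applied with the $a_i$ or their combinations including the class $[S]=(1,\dots,1)$ of the known fibration from Lemma \ref{lem:Leininger_fiber}; since a fibered class lies in the interior of $\mathcal{F}$ and fibered faces are exactly those top-dimensional faces whose interior contains a fibration class (Thurston), $\mathcal{F}$ is the fibered face containing $S$. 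I would phrase this last step by invoking the discussion in Section \ref{subsec:norm_and_fibface}: the fibered face $\Delta$ is characterized as the top-dimensional face whose open cone contains the integral fibration class, and uniqueness of the face containing a given boundary point of $B$ in its relative interior does the rest.

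The main obstacle is the bookkeeping in the second step — ensuring that "$x=1$ at one relative-interior point plus $x \leq 1$ everywhere on $\sigma$" genuinely forces $x \equiv 1$ on $\sigma$, and that this in turn pins down a single supporting hyperplane equal to $\mathrm{aff}(a_1,\dots,a_n)$ rather than merely some hyperplane through $a$. This is where I must use that $\sigma$ is top-dimensional: a top-dimensional affine piece of $\partial B$ on which the (convex, piecewise-linear) norm is constant must lie in a unique facet, and the linear functional representing $x$ on that facet's cone is the unique one taking value $1$ on the affine span $H$; hence $H$ supports $B$ and $\mathcal{F} = B \cap H \supseteq \sigma$. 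Everything else is a direct appeal to Thurston's theorems recalled in Section \ref{subsec:norm_and_fibface}.
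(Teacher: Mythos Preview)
Your proposal is correct and follows the same approach as the paper, which simply says ``the proof is a direct consequence of the convexity of the unit Thurston's norm ball.'' You have unpacked that one-line argument in detail, and you also correctly flag the point the paper leaves implicit: convexity alone only shows $\sigma$ sits in a top-dimensional face, and the word ``fibered'' is justified in the application (Corollary~\ref{cor:sub_facets_in_C(n,0)}) because the interior point $a=\frac{1}{n}(1,\dots,1)$ is a rescaling of the known fiber $S$.
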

\begin{proof}
	The proof is a direct consequence of the convexity of the unit Thurston's norm ball.
\end{proof}

Then, as in the magic $3$-manifold case, we can calculate the Euler characteristic of any primitive points in $\mathcal{F}$.
\begin{cor}\label{cor:sub_facets_in_C(n,0)}
	The convex hull of the points $e_1,e_2,\cdots,e_n$ and $\dfrac{1}{n}(1,\cdots,1)$ is a subset of the fibered face $\mathcal{F}$.
	Moreover, for any primitive point $\alpha := (\alpha_1,\cdots,\alpha_n)$ in the cone $\mathcal{C} := \R^+\cdot\mathcal{F}$, the Euler characteristic of the representative of $\alpha$ is $\alpha_1+\cdots+\alpha_{n}$.
\end{cor}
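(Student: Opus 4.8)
The plan is to apply Lemma \ref{lem:eqn_of_fib_face} with the $n$ vertices $a_i := e_i = [K_i]$ and the interior point $a := \frac{1}{n}(1,\dots,1) = \frac{1}{n}[S]$. First I would check the three hypotheses of that lemma. Linear independence of $e_1,\dots,e_n$ is clear since they are the coordinate basis dual to the basis $[K_i]$ of $H_2(M,\partial M)$ from Lemma \ref{lem:H2coord}. Each $[K_i]$ is a twice-punctured disk, hence an essential surface with $\chi(K_i) = -1$; one must argue it is norm-minimizing in its class, so that $x(e_i) = 1$. Since $M(n)$ is hyperbolic (Neumann--Reid), the Thurston norm is a genuine norm, and $K_i$ being a planar surface of Euler characteristic $-1$ that cannot be compressed or $\partial$-compressed (a twice-punctured disk has no lower-complexity representative, as $S^2$ and the disk and annulus are excluded in a hyperbolic manifold) gives $x([K_i]) = -\chi(K_i) = 1$. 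Finally, the interior point: by Lemma \ref{lem:H2coord} the fiber $S$ has coordinates $(1,\dots,1)$, and $S$ is a genus-$1$ surface with $n$ boundary components so $x([S]) = -\chi(S) = n$ (it is norm-minimizing because fibers always are); hence $x\big(\frac{1}{n}(1,\dots,1)\big) = 1$, and $\frac1n(1,\dots,1)$ lies in the interior of the simplex $\sigma = \mathrm{conv}(e_1,\dots,e_n)$. Lemma \ref{lem:eqn_of_fib_face} then gives $\sigma \subset \mathcal{F}$ and identifies the supporting hyperplane as $\{(\alpha_1,\dots,\alpha_n) : \alpha_1 + \cdots + \alpha_n = 1\}$.

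For the second assertion, recall the Thurston norm is linear on the cone $\mathcal{C} = \R^+\cdot\mathcal{F}$ over a single face. By the hyperplane computation just obtained, for any $\alpha \in \mathcal{C}$ we have $x(\alpha) = \alpha_1 + \cdots + \alpha_n$ (this linear functional equals $1$ on $\mathcal{F}$ and is positive on the cone since $\mathcal{F}$ sits in the positive orthant's face). When $\alpha$ is a primitive integral class in $\mathcal{C}$, Thurston's fibration theorem (recalled in Section \ref{subsec:norm_and_fibface}) says $\alpha$ is represented by a fiber $F_\alpha$ of a fibration of $M$ over $S^1$, and such a fiber is norm-minimizing, so $-\chi(F_\alpha) = x(\alpha) = \alpha_1 + \cdots + \alpha_n$. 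This is exactly the claimed formula for the Euler characteristic (with the sign convention $\chi = -x$ implicit as elsewhere in the paper, i.e. we report $|\chi(F_\alpha)| = \alpha_1+\cdots+\alpha_n$).

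The step I expect to be the main obstacle is verifying that each $K_i$ is norm-minimizing in its homology class, i.e. that $x(e_i) = 1$ rather than $x(e_i) = 0$. One has to rule out the possibility that $[K_i]$ is represented by a collection of disks, annuli, or spheres; this follows from irreducibility and atoroidality of $M(n)$ together with the fact that $[K_i]$ is a nontrivial class with nontrivial boundary (its boundary on $\partial\mathcal{N}(C(n))$ consists of the two meridian-type curves where the disk meets the adjacent components), but it should be spelled out carefully. Everything else — linear independence, the Euler characteristic count for $S$, and the linearity of the norm on $\mathcal{C}$ — is routine given Lemmas \ref{lem:H2coord} and \ref{lem:eqn_of_fib_face} and the general theory recalled in Section \ref{subsec:norm_and_fibface}.
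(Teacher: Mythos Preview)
Your proposal is correct and follows essentially the same route as the paper: apply Lemma~\ref{lem:eqn_of_fib_face} with $a_i = e_i$ and $a = \tfrac{1}{n}(1,\dots,1)$, then read off the supporting hyperplane $x_1+\cdots+x_n = 1$ and use linearity of the Thurston norm on the cone. If anything, you are more careful than the paper, which simply asserts the hypotheses of the lemma hold without explicitly verifying that $x(e_i)=1$; your discussion of why the twice-punctured disk $K_i$ is norm-minimizing (using hyperbolicity of $M(n)$ to rule out sphere, disk, or annulus representatives) fills in a detail the paper leaves to the reader.
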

\begin{proof}
	Use Lemma \ref{lem:eqn_of_fib_face} with $a_i = e_i$ and $a = \dfrac{1}{n}(1,\cdots,1)$.
	Since we already observed that $na$ is a fiber and $-\chi(na) = n$, it gives a linear equation $x_1 + \cdots + x_n = 1$, which is then the equation of a supporting hyperplane for the fibered face $\mathcal{F}$.
	Plugging $(\alpha_1, \cdots, \alpha_n)$ into $x_1 + \cdots + x_n$, we get the Euler characteristic for $\alpha$.
\end{proof} 

To understand fully the topological type of surfaces representing a given fibered point in $\mathcal{C}$, we will use a slightly generalized version of the boundary formula proven by Kin and Takasawa, \cite{kin2008pseudo}.
\begin{lem}[slight generalization of Lemma 3.1 in \cite{kin2008pseudo}]\label{lem:gen_bdd_formula}
	Suppose $\Sigma$ is a minimal representative of $(a_1, \cdots, a_n) \in \mathcal{C}$.
	Then the number of boundaries of $\Sigma$ is equal to $\sum_{i = 1}^n \gcd(a_{i-1} + a_{i+1}, a_i)$, where the subscripts are understood modulo $n$.
\end{lem}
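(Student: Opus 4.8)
The plan is to reduce the computation of the number of boundary components of a minimal representative $\Sigma$ of a fibered class $(a_1,\dots,a_n)\in\mathcal{C}$ to a local count at each component $L_i$ of the link $C(n)$. The key geometric input is that $\Sigma$ is (isotopic to) a fiber, and its boundary $\partial\Sigma$ lies on the torus boundary components $\partial\mathcal{N}(L_i)$ of $M(n)$; on each such torus, $\partial\Sigma$ consists of a number of parallel coherently-oriented curves of some slope, and the number of these curves is exactly the number of boundary circles of $\Sigma$ meeting $L_i$. Summing over $i$ gives the total boundary count. So the real task is: on the torus around $L_i$, compute the number of parallel copies of $\partial\Sigma$, which equals $\gcd$ of the two intersection numbers of $[\partial\Sigma]\in H_1(\partial\mathcal{N}(L_i))$ with the meridian and with a chosen longitude.

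First I would set up the homology bookkeeping. The class $[\Sigma]=\sum a_i [K_i]$, where $K_i$ is the twice-punctured disk bounded by $L_i$. Under the boundary map $\partial_*\colon H_2(M,\partial M)\to H_1(\partial M)=\bigoplus_i H_1(\partial\mathcal{N}(L_i))$, I need to track the image of each $[K_i]$. The disk $K_i$ contributes its own boundary (a longitude-type curve on $\partial\mathcal{N}(L_i)$) together with meridional contributions on the neighboring tori $\partial\mathcal{N}(L_{i-1})$ and $\partial\mathcal{N}(L_{i+1})$, since $K_i$ is punctured exactly by the two adjacent link components — this is where the cyclic/chain structure of $C(n)$ and the clasps enter, and where the indices $i\pm1 \bmod n$ come from. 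Combining, the restriction of $\partial_*(\sum a_i[K_i])$ to the torus $\partial\mathcal{N}(L_i)$ is, in a suitable meridian–longitude basis, the class $(a_{i-1}+a_{i+1})\,\mu_i + a_i\,\lambda_i$ (or the analogous expression; the precise coefficient on the longitude coming from the $i$-th disk's own boundary, the meridional coefficient from the two neighbors). The number of components of $\Sigma$ on that torus is then $\gcd(a_{i-1}+a_{i+1}, a_i)$, and summing over $i$ from $1$ to $n$ yields the formula.

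I would then need to justify two points carefully. The first is that a minimal representative of a fibered class has boundary on each torus consisting of parallel coherently-oriented essential simple closed curves, so that the homological count $\gcd$ actually equals the geometric count of boundary circles — this follows from the fiber being a fiber (its boundary has no trivial or oppositely-oriented components since it is $\pi_1$-injective and the fibration restricts to a fibration of each boundary torus over $S^1$), and it is exactly the mechanism behind Lemma 2.6 in \cite{kin2014dynamics} and Lemma 3.1 in \cite{kin2008pseudo}. The second is the explicit identification of $\partial_*[K_i]$ restricted to each torus, which requires a concrete picture of how $K_i$ sits relative to $L_{i-1}, L_i, L_{i+1}$; here I would use the orientations fixed in the definition of $C(n)$ (all clasps of the same type, $2$ half-twists on $L_1$) and check that the half-twists on $L_1$ do not change the homological count — intuitively because half-twists are isotopies of the link that only reshape clasps and do not alter intersection numbers of $[\partial\Sigma]$ with meridians and longitudes.

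The main obstacle I expect is this last bookkeeping step: getting the coefficients of $\partial_*[K_i]$ on each boundary torus exactly right, with consistent orientation conventions, so that the expression $\gcd(a_{i-1}+a_{i+1}, a_i)$ emerges rather than some variant like $\gcd(a_{i-1}-a_{i+1}, a_i)$ or with shifted indices. Since Kin–Takasawa \cite{kin2008pseudo} prove essentially this statement for the closely related chain link without the half-twist, the cleanest route is probably to quote their argument structure and only verify that passing to $C(n)$ (i.e., inserting the two half-twists, which is what makes the link non-alternating) leaves the boundary-slope computation — hence the $\gcd$ formula — unchanged. That reduction, plus the orientation check, is the heart of the proof; the rest is the elementary torus-geometry fact that a primitive multiple count on $T^2$ is computed by a $\gcd$.
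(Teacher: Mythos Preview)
Your proposal is correct and follows essentially the same route as the paper: compute $\partial_*[K_i]=l_i-m_{i-1}-m_{i+1}$ in the meridian--longitude basis, sum to get the class on each torus $T_i$ as $a_i\,l_i-(a_{i-1}+a_{i+1})\,m_i$, and then use that a fiber meets each boundary torus in coherently oriented parallel curves so the component count is the $\gcd$. The paper's proof is terser and simply asserts the formula for $\partial_*[K_i]$ without the half-twist discussion you flag, but the structure is identical.
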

\begin{proof}
	The proof is analogous to the original one in \cite{kin2008pseudo}.
	Note that $\partial \mathcal{N}(L_i) := \bigcup_{i = 1}^n T_i$, where $T_i$ is a torus whose longitude is $L_i$.
	So we can talk about the longitudes $[l_i]$'s and meridians $[m_i]$'s of each $T_i$, which together form a basis of $H_1(\bigcup_{i = 1}^n T_i)$.
	Then using the long exact sequence for the pair $\left(M(n), \partial M(n) = \bigcup_{i = 1}^n T_i)\right)$, we get a boundary map $\partial_*$ as in the lemma \ref{lem:H2coord}.
	
	Note that $\partial_*$ sends $[K_i] \mapsto l_i - m_{i-1} - m_{i+1}$, where the subscript $i$ is to be understood modulo $n$.
	Hence,
	\[
	\sum_{i = 1}^n a_i[K_i] \mapsto \sum_{i = 1}^n a_il_i - \sum_{i = 1}^n(a_i m_{i-1} + a_i m_{i+1})
	\]
	Since $\Sigma$ is the minimal representative of $(a_1, \cdots, a_n)$, the boundary of $\Sigma$ which intersects with $T_i$ is a union of oriented parallel simple closed curves on $T_i$.
	The slope of such a curve is decided by the ratio of $l_i, m_i$, which is $(a_{i-1}+a_{i+1}, a_i)$.
	Similarly, the number of boundaries that intersects with $T_i$ is equal to $\gcd(a_{i-1}+a_{i+1}, a_i)$.

\end{proof}

\subsection{Monodromy of $S$}\label{sub:monodromy}
Now we focus on the monodromy map with fiber $S$, provided from \cite{leininger2002surgeries}.
We need another theorem proven by Gabai.
\begin{thm}[\cite{gabai1985murasugi}, Cor 1.4]\label{thm:monodromy}
	Suppose that $R$ is a Murasugi sum of $R_1, R_2$ with $\partial R_i = L_i$, where $L_i$ is a fibered link with monodromy $f_i$ which restricts to the identity on $\partial R_i$, resp.
	Then $L = L_1\cup L_2$ is fibered link with the fiber $R$ and its monodromy map is $f = f_2'\circ f_1$ where $f_i'$ is the induced map on $R$ by inclusion.
\end{thm}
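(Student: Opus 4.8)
The plan is to separate the two assertions. Since Theorem~\ref{thm:murasugi_sum} already gives that $R$ is a fiber surface, the complement $S^3 \setminus \mathring N(L)$ fibers over $S^1$ with fiber $\mathring R$; equivalently, in sutured-manifold language, the complementary sutured manifold $(S^3 \setminus \mathring N(R), \gamma)$ --- where $\gamma = \partial N(R)\cap \partial N(L)$ is the annular suture --- is a product sutured manifold $R \times [0,1]$, and $S^3$ is recovered by regluing $R \times \{1\}$ to $R \times \{0\}$ by the monodromy, extended over the binding $L$ by the identity. The whole content of the statement is therefore to identify this regluing homeomorphism, up to isotopy rel $\partial R$, with $f_2' \circ f_1'$, where $f_i'$ is $f_i$ extended by the identity from $R_i$ to $R$. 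This extension makes sense once each $f_i$ has been isotoped (rel $\partial R_i$) to fix the plumbing polygon $D = R_1 \cap R_2$ pointwise, which is always possible for an orientation-preserving diffeomorphism that is the identity on the boundary.

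First I would realize the complementary sutured manifold of $R$ as an amalgamation of those of $R_1$ and $R_2$. Using the separating sphere $S^2 = B_1 \cap B_2$ of the Murasugi sum and the hypothesis $R_i \cap S^2 = D$, the goal is to exhibit a product disk in $(S^3 \setminus \mathring N(R),\gamma)$ --- essentially the part of $S^2$ lying outside $N(R)$ --- whose decomposition splits $(S^3 \setminus \mathring N(R),\gamma)$ into $(S^3 \setminus \mathring N(R_1),\gamma_1)$ and $(S^3 \setminus \mathring N(R_2),\gamma_2)$, compatibly with their product structures. Reversing this, $(S^3 \setminus \mathring N(R),\gamma)$ is obtained by gluing the two product blocks $R_1 \times [0,1]$ and $R_2 \times [0,1]$ along a square $D \times [0,1]$ in their boundaries, with all the $[0,1]$-fibers matched up; the amalgamated manifold is again a product $R \times [0,1]$ with $R = R_1 \cup_D R_2$.

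The core step is then to read the monodromy off this amalgamation. Under the product identification $S^3 \setminus \mathring N(R) \cong R \times [0,1]$, passing from the page $R \times \{0\}$ to the page $R \times \{1\}$ consists of flowing first through the $R_1$-block --- which moves the $R_1$-part of the page by $f_1$ and fixes the rest, using that $f_1$ has been arranged to fix $\partial R_1$ and $D$ --- and then through the $R_2$-block --- which moves the $R_2$-part by $f_2$ and fixes the rest. Composing gives exactly $f = f_2' \circ f_1'$. Finally, since each $f_i$ is the identity on $\partial R_i$, the composition is the identity on $\partial R = L$, so the binding is filled in the standard way; hence the open book just built is genuinely that of $S^3$, and $L$ is fibered with fiber $R$ and monodromy $f_2'\circ f_1'$.

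The main obstacle is the amalgamation step, in particular controlling things near the $2n$ corners of the polygon $D$, where arcs of $\partial R_1$ and arcs of $\partial R_2$ interleave. One must arrange the product structure on each block so that its boundary meets $S^2$ exactly along a collar of the product disk $D \times [0,1]$ with matching $[0,1]$-fibers, and doing this consistently at the corners is precisely where the ``$2n$-gon'' condition in the definition of a Murasugi sum is needed. This is the technical core of Gabai's argument, and it is where sutured-manifold theory --- rather than bare-hands cut-and-paste --- does the real work; granting it, the monodromy computation above is routine bookkeeping with the $[0,1]$-fibers.
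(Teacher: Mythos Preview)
The paper does not prove this theorem at all: it is quoted verbatim as Corollary~1.4 of Gabai's paper \cite{gabai1985murasugi} and used as a black box to describe the monodromy of the fiber $S$. There is therefore no ``paper's own proof'' to compare against.

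That said, your sketch is a faithful outline of Gabai's actual argument. The key ingredients --- viewing the complementary sutured manifold of a fiber surface as a product, decomposing $(S^3\setminus\mathring N(R),\gamma)$ along the product disk coming from the separating sphere $S^2$ into the two product blocks $R_i\times[0,1]$, and then reading off the monodromy by flowing through the blocks in order --- are exactly what Gabai does. Your identification of the corner behavior at the $2n$-gon $D$ as the delicate point is also accurate; Gabai handles this by working in the sutured category where the corners become honest sutures and the product-disk decomposition is a standard move. One small point: you correctly write $f=f_2'\circ f_1'$ with both maps primed, whereas the statement as quoted in the paper has a typo ($f_2'\circ f_1$); both $f_i$ must of course be extended by the identity to all of $R$.
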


By Gabai's Theorem \ref{thm:monodromy} and Lemma \ref{lem:hopf_band}, the monodromy of our given fiber $S$ is equal to the composition of the Dehn twists around the Hopf bands.
More precisely, $S$ admits a monodromy composed of $1$ horizontal Dehn twist followed by $n$ vertical Dehn twist.
Thus we have the following corollary,
\begin{cor}[Monodromy fixes punctures]\label{cor:fixdim}
	Let $\psi$ be the monodromy map of given fiber $S$.
	Then $\psi_* : H_1(S) \to H_1(S)$ fixes a subspace of dimension at least $n-1$.
\end{cor}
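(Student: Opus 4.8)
The plan is to compute the action of the monodromy $\psi$ on $H_1(S)$ directly from the description of $\psi$ as a product of Dehn twists along the cores of the Hopf bands used in the Murasugi decomposition. By Gabai's Theorem \ref{thm:monodromy} and Lemma \ref{lem:hopf_band}, $\psi = T_{c_0} T_{c_1} \cdots T_{c_n}$, where $c_0$ is the core of the single horizontal Hopf band and $c_1, \dots, c_n$ are the cores of the $n$ vertical Hopf bands. Since $S$ has genus $1$ and $n$ boundary components, $\dim H_1(S; \R) = 2 + (n-1) = n+1$. The first thing I would do is fix an explicit basis of $H_1(S; \R)$ adapted to the picture in Figure \ref{fig:FiberS}: roughly, a symplectic pair $\{a, b\}$ coming from the genus, together with $n-1$ classes $d_1, \dots, d_{n-1}$ represented by curves parallel to $n-1$ of the boundary components (the $n$-th being determined by the others).

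Next I would record the homology classes $[c_i]$ of the twisting curves in this basis and apply the Picard–Lefschetz formula $T_\gamma(x) = x + \langle x, [\gamma]\rangle [\gamma]$ (interpreted with the intersection pairing on the punctured surface). The key observation is that the peripheral classes $d_j$ are disjoint from every core curve $c_i$ — the cores of the Hopf bands lie in the interior of $S$ away from $\partial S$ — so $\langle d_j, [c_i]\rangle = 0$ for all $i, j$, and hence each $T_{c_i}$ acts trivially on the span of $\{d_1, \dots, d_{n-1}\}$. Moreover, each $T_{c_i}$ adds to any class only a multiple of $[c_i]$, and since the Hopf bands are glued along disks one can check that every $[c_i]$ is itself supported in the genus part, i.e. lies in $\mathrm{span}\{a, b\}$ (the core of a Hopf band is null-homologous in a neighborhood of any boundary torus). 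Consequently the entire group generated by the $T_{c_i}$ — in particular $\psi$ — fixes $\mathrm{span}\{d_1, \dots, d_{n-1}\}$ pointwise, which is an $(n-1)$-dimensional subspace. That gives $\kappa(\psi) \geq n-1$, which is exactly the claim.

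Alternatively, and perhaps more cleanly, one can argue with the mapping torus: we already know from Lemma \ref{lem:H2coord} and the surrounding discussion that $b_1(M(n)) = n$, and, as noted right after the statement of Proposition 2.2 of \cite{agol2016pseudo} in the excerpt, for a mapping torus with fiber $S$ and monodromy $\psi$ the Mayer–Vietoris sequence gives $b_1(M(n)) = \kappa(\psi) + 1$. Hence $\kappa(\psi) = n - 1$, in fact with equality. I would likely present the homological computation as the main argument (since it is self-contained and also pins down which subspace is fixed, useful later) and mention the mapping-torus count as a consistency check.

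The main obstacle is bookkeeping rather than conceptual: one must set up the basis of $H_1(S)$ and the intersection pattern of the Hopf band cores carefully enough to be sure that the peripheral classes really are fixed and that no $[c_i]$ has a component along them. The subtlety is that on a surface with boundary the "intersection pairing" is degenerate and the peripheral classes can be in its radical, so one has to phrase Picard–Lefschetz correctly and verify geometrically (from Figure \ref{fig:FiberS}) that each core curve $c_i$ can be isotoped off $\partial S$; once that is done the conclusion is immediate.
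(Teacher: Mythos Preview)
The paper gives no separate proof of this corollary: it is stated as an immediate consequence of the preceding description of $\psi$ as a product of Dehn twists, and the underlying fact had already been recorded earlier in Section~\ref{sec:nchainlink} (``since the first Betti number of $M$ is $n$, every monodromy map of its fiber fixes a homological subspace of dimension $n-1$''). Your second approach --- reading $\kappa(\psi)=n-1$ off from $b_1(M(n))=n$ via the mapping-torus Mayer--Vietoris sequence --- is thus exactly the paper's argument.

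Your first approach (direct Picard--Lefschetz computation) also works and has the merit of identifying \emph{which} subspace is fixed, in line with the corollary's title. But it contains a mis-step you should remove. The claim that ``every $[c_i]$ lies in $\mathrm{span}\{a,b\}$'' is false: the $n$ vertical cores $c_1,\dots,c_n$ are pairwise disjoint and each meets $c_0$ once, so if they all lay in a $2$-dimensional subspace they would all be homologous to one another; but $[c_i]-[c_j]$ is a nonzero peripheral class (carried by the boundary components sitting between the $i$-th and $j$-th vertical bands), and in fact $[c_0],[c_1],\dots,[c_n]$ is a basis of $H_1(S;\R)\cong\R^{n+1}$. Fortunately that claim is superfluous. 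Your preceding sentence already shows that each $T_{c_i}$ fixes every $d_j$ (because peripheral classes lie in the radical of the intersection form, or simply because they can be isotoped off the cores), and that alone gives that the composition $\psi$ fixes the $(n{-}1)$-dimensional span of $d_1,\dots,d_{n-1}$ pointwise. Drop the ``Moreover'' sentence and the direct argument is clean.
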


We end this subsection with one remark. 
In \cite{leininger2002surgeries}, Leininger proved not only that the $n$-chained link that has only $2$ half twists is fibered, but also that $n$-chained links with $p$ half twists with $0 \leq p \leq n$ are fibered, except when $(n,p) = (2,-1)$.
Since such fibers are still a consecutive Murasugi sum of Hopf bands, the monodromy of each fiber is easily understood.
In a future paper, the second and fourth author intend to study the unit Thurston norm ball, the fibered faces and the Teichm\"uller polynomials of $n$-chained links complements with $p$ half twists.

\subsection{Stretch factors for the ray $(1, \cdots, 1)$}\label{sec:stretch_factor}
In this section we compute the stretch factor of given fiber in $\mathcal{F}$.
We denote the surface obtained from performing the Seifert algorithm to $C(n)$ by $S := S_n$.
Since $M(n)$ is the complement of $C(n)$, the second homology $H_2 = H_2(M(n), \partial M(n); \mathbb{Z})$ is a free abelian group of rank $n$, with a canonical basis given by the meridians of the link component.
With that in mind, we remark that $S_n$ is a surface of genus one with $n$ boundaries and its coordinates in $H_2$ are $(1,1,\cdots,1)$.

Thus, if $S_n$ is placed as suggested in figure \ref{fig:FiberS}, the monodromy $\psi_n$ is the composition of the $n$ vertical multi-twists directed downward followed by the left Dehn twist along the core of horizontal band.

\begin{prop}
   The stretch factor of the monodromy corresponding to $(1,1,\cdots,1)$ is $\dfrac{n+2 +\sqrt{n^2 + 4n}}{2}$.
\end{prop}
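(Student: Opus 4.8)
The plan is to apply Thurston's construction (Theorem \ref{Thurston's construction}) to the pair of filling multicurves whose Dehn twists generate the monodromy $\psi_n$. The monodromy is the product of $n$ vertical Dehn twists and one horizontal Dehn twist, so I would first identify two multicurves $A$ and $B$ on $S = S_n$ such that $\psi_n$ is expressible in the group $\langle T_A, T_B\rangle$. Concretely, since the Seifert surface is a consecutive Murasugi sum of one horizontal Hopf band and $n$ vertical Hopf bands, the core curves of the vertical bands form a multicurve $A = \{\alpha_1, \ldots, \alpha_n\}$ (these are disjoint since the vertical bands are plumbed in sequence along a common horizontal band, meeting it but not each other), and the core of the horizontal band is a single curve $\beta$, so $B = \{\beta\}$. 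One must check these fill $S$: cutting along all the band cores should decompose $S$ into disks and once-punctured disks, which follows from the plumbing picture. Then $\psi_n = T_\beta^{-1} T_{\alpha_1} \cdots T_{\alpha_n}$ (up to the left/right conventions) lies in $\langle T_A, T_B\rangle$, and by Theorem \ref{Thurston's construction} its stretch factor is the spectral radius of the image of this word under $\rho$.

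Next I would compute the intersection matrix $N$. Each vertical curve $\alpha_i$ meets the horizontal curve $\beta$ in exactly the plumbing square where the $i$-th vertical band is attached, so $i(\alpha_i, \beta) = 1$ for all $i$; thus $N$ is the $n \times 1$ column vector of all ones, $N = (1, \ldots, 1)^T$. Then $NN^T$ is the $n \times n$ all-ones matrix times... wait — with $B$ a single curve, $N$ is $n \times 1$, so $N^T N = n$ is the relevant scalar and $\mu = n$ (the largest eigenvalue of the $1 \times 1$ matrix $N^T N$, equivalently of $NN^T$ which has rank one with nonzero eigenvalue $n$). So $\rho$ sends $T_A = T_{\alpha_1}\cdots T_{\alpha_n} \mapsto \begin{bmatrix} 1 & \sqrt{n} \\ 0 & 1\end{bmatrix}$ and $T_B = T_\beta \mapsto \begin{bmatrix} 1 & 0 \\ -\sqrt{n} & 1 \end{bmatrix}$.

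Finally I would compute the image of the monodromy word. Since $\psi_n$ uses each vertical twist once and the horizontal twist once, its image is (up to inversion and orientation conventions) the product
\[
\begin{bmatrix} 1 & 0 \\ \sqrt{n} & 1 \end{bmatrix}\begin{bmatrix} 1 & \sqrt{n} \\ 0 & 1\end{bmatrix} = \begin{bmatrix} 1 & \sqrt{n} \\ \sqrt{n} & n+1 \end{bmatrix},
\]
whose trace is $n+2$ and determinant $1$, so its eigenvalues satisfy $\lambda^2 - (n+2)\lambda + 1 = 0$, giving $\lambda = \tfrac{n+2 + \sqrt{(n+2)^2 - 4}}{2} = \tfrac{n+2+\sqrt{n^2+4n}}{2}$. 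To conclude I must check this representation is hyperbolic (trace $n+2 > 2$, so yes) so that $\psi_n$ is pseudo-Anosov with stretch factor exactly this largest eigenvalue, matching the claim. The main obstacle I anticipate is the bookkeeping in the first step: verifying that the vertical band cores are genuinely pairwise disjoint and that together with the horizontal core they fill $S$, and getting the twist directions and the precise word for $\psi_n$ right so that its $\rho$-image is this particular product rather than some conjugate with a different trace — though since conjugate matrices share eigenvalues, even a misidentified (but conjugate) word would give the same answer, which is reassuring.
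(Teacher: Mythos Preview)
Your proposal is correct and follows essentially the same approach as the paper: apply Thurston's construction to the horizontal and vertical Hopf-band cores, compute $\mu = n$ from the intersection data, and read off the stretch factor as the largest eigenvalue of the image of the monodromy word, which has trace $n+2$. The only cosmetic difference is that the paper assigns the single horizontal core to $A$ and the $n$ vertical cores to $B$ (so that $N$ is a $1\times n$ row vector with $NN^T=(n)$ directly), whereas you swap these roles; this has no effect on $\mu$ or on the trace of the resulting $\PSL(2,\R)$ element.
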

\begin{proof}
Recall from section \ref{subsec:Thurstonconstruction}, Thurston constructed a method to make pseudo-Anosov element from pair of filling multicurves and its Dehn tiwsts. The core of the horizontal band and the vertical bands fill the surface by dividing it into once punctured disks, therefore our monodromy is obtained from Thurston's construction. We use the notations of Theorem \ref{Thurston's construction}.
Since the core of the horizontal band meets each vertical bands once, we have that $N=(1,\cdots,1)$. 
Hence, $NN^T=(n)$ and thus $\mu =n$. 
The map $T_AT_B^{-1}$ represents the monodromy and the trace of its representation is equal to $n+2$. 
It is thus hyperbolic and the stretch factor of the monodromy is $\dfrac{n+2 +\sqrt{n^2 + 4n}}{2}$.  
\end{proof}

Note that the stretch factor is asymptotic to $n$ and smaller than $n+2$ here. Therefore the entropy is asymptotic to $\log n$ and strictly less than $\log(n+2)$. Remark that in the case of this monodromy, an invariant train track is easily found by smoothing the cores of the Hopf bands properly. Since this invariant train track provides an oriented singular foliations on $S_n$, the stretch factor can be calculated from the leading eigenvalue of its homology action.
Also, one could compute the stretch factors from the leading eigenvalue of the transition matrix of the invariant train track.

\section{Sequences around the ray $(1, \cdots, 1)$}\label{sec:sequence}
In this section, we find many sequences of fibers in $\mathcal{F}$ that projectively converge to the ray $R$ passing through the point $\mathbbm{1}_N := (\underbrace{1, \cdots, 1}_{N \text{ times}})$.
As before, we denote by $M := M_N$ the mapping torus of $S_{1,N}\times[0,1] / (x, 0) \sim (\psi(x),1)$, where $\psi := \psi_N$ is the pseudo-Anosov map discussed in Section \ref{sub:monodromy}.
We will drop the index $N$ when no confusion is possible.
Even though we do not know the exact shape of the fibered cone $\mathcal{C}$ which contains the point $\mathbbm{1}$, recall that from section \ref{sec:nchainlink} we know the following.
\begin{prop} \label{prop:simplex}
	Let $E = \{e_1,\cdots,e_n\}$ be the standard basis for $\R^n$ and let $\sigma$ be the simplex spanned by $E$.
	Then the interior of the cone over $\sigma$ is a subset of the fibered cone $\mathcal{C}$ of $M$ containing the point $\mathbbm{1}_N$.
	Moreover, the Euler characteristic for any primitive point $(x_1, \cdots, x_N)$ in $\mathcal{C}$ is given by $\sum_{i = 1}^N x_i$.
\end{prop}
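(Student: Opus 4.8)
The plan is to deduce the proposition directly from the description of $\mathcal{F}$ obtained in Section~\ref{sec:nchainlink}; in fact it is essentially a repackaging of Corollary~\ref{cor:sub_facets_in_C(n,0)}, noting that $\tfrac1N\mathbbm{1}_N = \tfrac1N\sum_{i=1}^N e_i$ already lies in $\sigma = \mathrm{conv}(e_1,\dots,e_N)$, so the argument will be short.

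First I would recall from Lemma~\ref{lem:H2coord} that the Seifert fiber $S = S_{1,N}$ has coordinates $\mathbbm{1}_N$ in the basis $\{[K_i]\}$ and that $-\chi(S) = N$; since a fiber is norm-minimizing in its homology class this gives $x(\mathbbm{1}_N) = N$, hence $x(\tfrac1N\mathbbm{1}_N) = 1$. Each $e_i = [K_i]$ is represented by a twice-punctured disk, so $x(e_i) \le 1$, and since $\tfrac1N\mathbbm{1}_N = \tfrac1N\sum_i e_i$ is a convex combination, convexity of $x$ together with $x(\tfrac1N\mathbbm{1}_N) = 1$ forces $x(e_i) = 1$ for every $i$. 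Now Lemma~\ref{lem:eqn_of_fib_face} applies with $a_i = e_i$ and $a = \tfrac1N\mathbbm{1}_N$: the $e_i$ are linearly independent, $a$ lies in the interior of $\sigma$, and $x(a) = 1$, so $\sigma$ is contained in a fibered face $\mathcal{F}$ whose supporting hyperplane is precisely the affine hyperplane through $e_1,\dots,e_N$, namely $\{x_1 + \dots + x_N = 1\}$. Since $a$ is in the relative interior of $\sigma$ it lies in the interior of $\mathcal{F}$, so $\mathcal{C} = \R^+\cdot\mathcal{F}$ is the fibered cone containing $\mathbbm{1}_N = N a$; and because $\sigma$ is $(N-1)$-dimensional, $\R^+\cdot\sigma$ is an $N$-dimensional cone whose interior is an open subset of $\R^N$ contained in the closed cone $\overline{\mathcal{C}} = \R^+\cdot\mathcal{F}$, hence contained in its interior $\mathcal{C}$.

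For the last assertion, recall that on the cone over a fibered face the Thurston norm $x$ is linear and equals $|\chi|$ of the minimal (fiber) representative at integral points; by the previous paragraph this linear functional is $(x_1,\dots,x_N)\mapsto x_1 + \dots + x_N$, since it takes the value $1$ on each $e_i$. Thus a primitive integral point $(x_1,\dots,x_N)\in\mathcal{C}$ is represented by a fiber $F$ with $|\chi(F)| = x_1 + \dots + x_N$, which is the stated formula (in the sign convention of Corollary~\ref{cor:sub_facets_in_C(n,0)}). The only points that need a few extra words are that $\tfrac1N\mathbbm{1}_N$ genuinely lies on the \emph{fibered} face and not merely on some facet of the norm ball — which is exactly the content of Lemma~\ref{lem:Leininger_fiber} and Lemma~\ref{lem:H2coord}, since it is projectively the Seifert fiber $S$ — and that $\sigma$ is top-dimensional inside the supporting hyperplane, so that the cone over its interior meets only the single fibered cone $\mathcal{C}$; neither presents any real obstacle, and the substance of the proposition was already established in Section~\ref{sec:nchainlink}.
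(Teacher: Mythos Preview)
Your proof is correct and is essentially the same route the paper takes: Proposition~\ref{prop:simplex} is stated in the paper only as a recall of Corollary~\ref{cor:sub_facets_in_C(n,0)}, whose proof is exactly ``apply Lemma~\ref{lem:eqn_of_fib_face} with $a_i=e_i$ and $a=\tfrac1N\mathbbm{1}_N$'' and then read off the supporting hyperplane $x_1+\dots+x_N=1$. You have reproduced this, adding the clean convexity argument for $x(e_i)=1$ and the dimension remark ensuring the open cone over $\sigma$ lands in the interior of $\mathcal{C}$, both of which the paper leaves implicit.
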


We thus know that not only the ray $R$ but all points with positive integer coordinates are in the interior of the fibered cone $\mathcal{C}$, and we can compute the entropy of points converging to that ray. We consider the following sequences of points, all of which have $N \geq 4$ coordinates,
\hfill
\begin{center}
	\begin{tabular}{ c c c }
		$(t+1, t+1, t, t, \cdots, t)$ &  & $(N-4)t + 4$ \\ 
		$(t+1, t+1, t+1, t, t, \cdots, t)$ &  & $(N-4)t + 5$ \\  
		$\vdots$ &  & $\vdots$ \\
		$(t+1, t+1, \cdots, t+1, t, t)$ & & $(N-4)t + N$
	\end{tabular}
\end{center}
The numbers appearing on the right of the sequences are the number of boundary components of the associated fibers, which are calculated using Lemma \ref{lem:gen_bdd_formula}.

Similarly, we can compute the number of genera $g$ for each of these sequences by using an Euler characteristic argument:
\[
    Nt + c = 2g + (N-4)t + c + 2 - 2  \qquad \text{ where } 2\leq c\leq N-2.
\]
Hence, the number of genera is the same for each sequences and equal to $g = 2t$.

Similarly, we consider following sequences of points having $N \geq 8$ coordinates.
\begin{center}
	\begin{tabular}{ c c c }
		$(t+1, t+1, t, t, t+1, t+1, t, t, \cdots, t)$ &  & $(N-8)t + 8$ \\ 
		$(t+1, t+1, t+1, t, t, t+1, t+1, t, t, \cdots, t)$ &  & $(N-8)t + 9$ \\  
		$\vdots$ &  & $\vdots$ \\
		$(t+1, t+1, \cdots, t+1, t+1, t, t, \cdots, t)$ & & $(N-8)t + N$
	\end{tabular}
\end{center}
The number of boundaries is again written on the right of the sequences and the number of genera is $g = 4t-1$ as shown by the following calculation:
\[
Nt + c = 2g + (N-8)t + (c+4) -2 \qquad \text{ where } 4\leq c\leq N-4, \quad \Rightarrow \quad g = 4t-1
\]

Inductively, we have infinitely many of these finite sequences which starts with $n$ number of $(t+1)$'s followed by $m$ number of $(t+1, t+1, t, t)$'s , and end with some consecutive $t$'s.
The table below summarize the conditions and the number of genera and boundaries obtained from each of these sequences.

~

\begin{center}
	\begin{tabular}{ c c c }
		\#genera & \#boundaries & condition on $N$\\
		$2t$ & $(N-4)t + c, ~4\leq c \leq N$ & $N\geq 4$\\
		$4t-1$ & $(N-8)t + c, ~8\leq c \leq N$ & $N\geq 8$\\
		$6t-2$ & $(N-12)t + c, ~12\leq c \leq N$ & $N\geq 12$\\
		$\vdots$ & $\vdots$ & $\vdots$ \\
		$2mt - (m-1)$ & $(N-4m)t + c, ~4m\leq c \leq N$ & $N\geq 4m$ \\
		$\vdots$ & $\vdots$ & $\vdots$ 
	\end{tabular}
\end{center}

Take $t=1$ and choose the first term on each sequence so that each tuple represents $S_{m+1,N}$.
This sequence starts with 
\[
m = 1 \quad \Rightarrow \quad (2,2,1,1,\underbrace{1,\cdots,1}_{i \text{ times}})
\]
\[
m = 2 \quad \Rightarrow \quad (2,2,1,1,2,2,1,1,\underbrace{1,\cdots,1}_{i \text{ times}}) 
\]
\[
m = 3 \quad \Rightarrow \quad(2,2,1,1,2,2,1,1,2,2,1,1,\underbrace{1,\cdots,1}_{i \text{ times}})
\]
and so forth. 

Also, if we choose the second term on each sequence, then each tuple represents $S_{m+1, N+1}$.
This sequence starts with
\[
m = 1 \quad \Rightarrow \quad (2,2,2,1,1,\underbrace{1,\cdots,1}_{i \text{ times}})
\]
\[
m = 2 \quad \Rightarrow \quad (2,2,2,1,1,2,2,1,1,\underbrace{1,\cdots,1}_{i \text{ times}}) 
\]
\[
m = 3 \quad \Rightarrow \quad(2,2,2,1,1,2,2,1,1,2,2,1,1,\underbrace{1,\cdots,1}_{i \text{ times}})
\]
and so forth. 

\begin{defn}[$(m,n)$-sequence]
	We call the sequence above an $(m,n)$-sequence and denote it by $\{s^{(m,n)}_i\}_{i\in \N}$. 
	\emph{i.e.,} 
	\[
	s^{(m,n)}_i = (\underbrace{2, \cdots, 2}_{n \text{ times}},\underbrace{2,2,1,1,\cdots,2,2,1,1}_{m \text{ number of } (2,2,1,1)\text{'s}},\underbrace{1,\cdots,1}_{i \text{ times}})
	\]
	For example, the $(1,1)$-sequence starts with $s^{(1,1)}_1 = (2,2,2,1,1,1)$, $s^{(1,1)}_2 = (2,2,2,1,1,1,1)$, $s^{(1,1)}_3 = (2,2,2,1,1,1,1,1)$ and so on.
	Note that the length of $s^{(m,n)}_i$ is $N := n+4m+i$ and each tuple represents a fiber of $M_N$ of topological type $S_{m+1, N+n}$.
\end{defn}

The following lemma is useful to approximate the upper bounds of entropies of $s^{(m,n)}_i$.
\begin{lem}[Lemma 3.11 in \cite{agol2016pseudo}]\label{lem:nor_ent_convex}
	Let $\mathcal{C}$ be a fibered cone for a mapping torus $M$.
	Then the entropy function $\mathfrak{h}(x)$ for $x\in \mathcal{C}$ is strictly convex.
	\emph{i.e.,} $\mathfrak{h}(u+v) < \mathfrak{h}(u)$ for $u\in \mathcal{C}$ and $v \in \overline{\mathcal{C}}$.
\end{lem}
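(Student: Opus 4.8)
The substance of this statement lies in the analytic structure of the entropy function on a fibered cone, which I would import from Fried's work on cross sections to flows (\cite{fried1982flowequivalence}, \cite{fried1983anosovflow}); the remainder is a short convexity argument. Put $\Psi := 1/\mathfrak{h}$. The plan is first to record the following facts: $\mathfrak{h}$ is continuous and positive on $\mathcal{C}$ and homogeneous of degree $-1$ (so $\mathfrak{h}(tu) = t^{-1}\mathfrak{h}(u)$ for $t>0$); $\Psi$ extends to a continuous, concave, degree-one homogeneous function on the closed cone $\overline{\mathcal{C}}$ which is positive on $\mathcal{C}$ and vanishes on $\partial\mathcal{C}$ (the monodromy entropy blowing up towards the boundary of the fibered cone); and — this is the crucial input — $\Psi$ is \emph{strictly} concave in every direction transverse to the rays through the origin. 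Since $\Psi>0$ on $\mathcal{C}$, the inequality $\mathfrak{h}(u+v)<\mathfrak{h}(u)$ is equivalent to $\Psi(u+v)>\Psi(u)$, and we may assume $v\neq 0$.

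I would then prove $\Psi(u+v)>\Psi(u)$ for $u\in\mathcal{C}$ and $v\in\overline{\mathcal{C}}\setminus\{0\}$ in two cases. If $v=cu$ for some $c>0$, homogeneity gives $\Psi(u+v)=\Psi((1+c)u)=(1+c)\Psi(u)>\Psi(u)$, since $\Psi(u)>0$. Otherwise $u$ and $v$ are linearly independent; set $m:=\tfrac{1}{2}(u+v)$, which lies in the open cone $\mathcal{C}$ because $u$ does and $v\in\overline{\mathcal{C}}$. The chord $[u,v]$ is not contained in any ray through the origin, so strict concavity of $\Psi$ along it, together with continuity up to the endpoint $v$, yields $\Psi(m)>\tfrac{1}{2}\bigl(\Psi(u)+\Psi(v)\bigr)\ge\tfrac{1}{2}\Psi(u)$, using $\Psi(v)\ge 0$. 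Hence $\Psi(u+v)=\Psi(2m)=2\Psi(m)>\Psi(u)$, as desired. Re-expressing this via $\mathfrak{h}=1/\Psi$ (with $\mathfrak{h}=+\infty$ on $\partial\mathcal{C}$) also recovers the assertions, used elsewhere in the paper, that $\mathfrak{h}$ is strictly convex along the fibered cone and diverges to $+\infty$ at $\partial\mathcal{C}$.

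The genuine obstacle is entirely in the imported input: the existence of the continuous homogeneous extension $\Psi$, and especially its \emph{strict} concavity transverse to rays, is the deep part, which I would quote from Fried's work (and its later refinements, e.g. via McMullen's Teichm\"uller polynomial) rather than reprove. In the elementary part, the only delicate case is $v\in\partial\mathcal{C}$: there $\Psi(v)=0$, so the superadditivity $\Psi(u+v)\ge\Psi(u)+\Psi(v)$ coming from concavity and homogeneity gives only $\Psi(u+v)\ge\Psi(u)$, and the strict inequality must be extracted from strict concavity along $[u,v]$. That some such hypothesis is unavoidable is illustrated by $\Psi(x,y)=\min(x,y)$ on the first quadrant, which is concave and degree-one homogeneous yet admits $\Psi(u+v)=\Psi(u)$ for suitable $u$ in the interior and $v$ on the boundary. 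When $v$ lies in the \emph{open} cone, by contrast, no strict concavity is needed, since then $\Psi(v)>0$ and superadditivity alone forces the strict inequality.
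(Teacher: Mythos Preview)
The paper does not give its own proof of this lemma: it is stated with attribution to \cite{agol2016pseudo} (as Lemma~3.11 there) and used as a black box. So there is nothing in the paper to compare your argument against line by line.

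Your reconstruction is along the standard lines and is essentially correct. The key imported fact---that $\Psi=1/\mathfrak{h}$ extends to a continuous, degree-one homogeneous, concave function on $\overline{\mathcal{C}}$, vanishing on $\partial\mathcal{C}$ and strictly concave transverse to rays---is indeed the content due to Fried (later sharpened by McMullen), and once granted, your two-case split handles the inequality cleanly. The only spot worth a second look is the boundary case $v\in\partial\mathcal{C}$: there you invoke strict concavity ``along $[u,v]$'' to get $\Psi(m)>\tfrac12(\Psi(u)+\Psi(v))$, but strict concavity is only asserted on the \emph{open} cone, while one endpoint $v$ lies on the boundary. This is easily repaired (and you gesture at it): since $m=\tfrac12(u+v)$ and $u$ both lie in the interior and are linearly independent, strict concavity on the interior forces the affine interpolant of $\Psi$ along $[u,v]$ to be strictly below $\Psi$ at some interior point (e.g.\ at $\tfrac14(3u+v)$), and then weak concavity plus continuity up to $v$ propagates the strict inequality to the midpoint. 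Your counterexample $\Psi(x,y)=\min(x,y)$ correctly shows why mere concavity would not suffice.
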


Applying this lemma to our $(m,n)$-sequences, we get the following
\begin{lem}\label{lem:nor_ent_bdd}
	The normalized entropy $\overline{\mathfrak{h}}(s^{(m,n)}_i)$ is less than $2N\log{(N+2)}$, where $N = n+4m+i$, the length of $s^{(m,n)}_i$.
\end{lem}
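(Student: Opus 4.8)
The plan is to bound the normalized entropy of each $s^{(m,n)}_i$ by comparing it to the ray $R$ through $\mathbbm 1_N$, whose monodromy stretch factor was computed in Section \ref{sec:stretch_factor}, and then to invoke the strict convexity of the entropy function on the fibered cone (Lemma \ref{lem:nor_ent_convex}). Recall that the un-normalized entropy $\mathfrak h$ is homogeneous of degree $-1$ on the cone $\mathcal C$, so the normalized entropy $\overline{\mathfrak h}(x) = |\chi(x)|\cdot \mathfrak h(x) = \big(\sum_i x_i\big)\mathfrak h(x)$ (using Proposition \ref{prop:simplex}) is constant along rays; it therefore suffices to produce, for each term of the sequence, a point on $R$ that lies ``below'' it in the sense of Lemma \ref{lem:nor_ent_convex}.

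First I would observe that each tuple $s^{(m,n)}_i$ has all coordinates equal to either $t+1$ or $t$ with $t = 1$, hence $s^{(m,n)}_i = \mathbbm 1_N + v$ where $v \in \{0,1\}^N \subset \overline{\mathcal C}$ is a nonzero, nonnegative integer vector (it is in the closure of the cone since all the $e_i$ and their sums lie in $\overline{\mathcal C}$). By Lemma \ref{lem:nor_ent_convex} applied with $u = \mathbbm 1_N$ and this $v$, we get $\mathfrak h(s^{(m,n)}_i) = \mathfrak h(\mathbbm 1_N + v) < \mathfrak h(\mathbbm 1_N)$. Now I would scale: the point $N\cdot\tfrac1N\mathbbm 1_N = \mathbbm 1_N$ has $|\chi| = N$ and, by the Proposition in Section \ref{sec:stretch_factor}, stretch factor $\lambda = \tfrac{N+2+\sqrt{N^2+4N}}2 < N+2$, so $\mathfrak h(\mathbbm 1_N) = \tfrac1N\log\lambda < \tfrac1N\log(N+2)$ after using homogeneity $\mathfrak h(\mathbbm 1_N) = \tfrac1N \mathfrak h(\tfrac1N\mathbbm 1_N\cdot N)$—more directly, the monodromy attached to the primitive class $\mathbbm 1_N$ has entropy $\log\lambda$ and Euler characteristic $N$, so its normalized entropy $\overline{\mathfrak h}(\mathbbm 1_N) = N\log\lambda < N\log(N+2)$.

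Combining: since $\overline{\mathfrak h}$ is constant on rays, $\overline{\mathfrak h}(s^{(m,n)}_i) = |\chi(s^{(m,n)}_i)|\,\mathfrak h(s^{(m,n)}_i)$. The Euler characteristic is $\sum_i (s^{(m,n)}_i)_j \le 2N$ (each of the $N$ coordinates is at most $2$), and $\mathfrak h(s^{(m,n)}_i) < \mathfrak h(\mathbbm 1_N) = \tfrac1N\overline{\mathfrak h}(\mathbbm 1_N) < \tfrac1N\log(N+2)$. Therefore
\[
\overline{\mathfrak h}(s^{(m,n)}_i) < 2N \cdot \frac{1}{N}\log(N+2) = 2N\log(N+2),
\]
wait—this gives $2\log(N+2)$; to land exactly on the claimed $2N\log(N+2)$ one keeps the bound $|\chi| \le 2N$ together with $\mathfrak h < \log(N+2)$ directly (i.e. bounding $\mathfrak h(s^{(m,n)}_i) \le \mathfrak h(\mathbbm 1_N)\cdot$ nothing sharper is needed), so the stated inequality follows a fortiori. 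I expect the only real subtlety is the bookkeeping in the last step—making sure which normalization of $\mathfrak h$ (homogeneous degree $-1$ versus the ``un-normalized'' entropy $\log\lambda$ of the actual monodromy) is being used at each point, and checking $v \in \overline{\mathcal C}$ rigorously; the convexity input and the stretch-factor computation are already in hand from the cited results, so no genuinely hard estimate remains.
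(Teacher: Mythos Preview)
Your approach is essentially identical to the paper's: decompose $s^{(m,n)}_i = \mathbbm{1}_N + v$ with $v \in \overline{\mathcal C}$, apply Lemma \ref{lem:nor_ent_convex} to get $\mathfrak h(s^{(m,n)}_i) < \mathfrak h(\mathbbm 1_N) < \log(N+2)$, and then multiply by $|\chi(s^{(m,n)}_i)| \le 2N$ (the paper phrases this as $|\chi|/N = (6m+2n+i)/(4m+n+i) < 2$, which is the same bound). Your mid-proof slip---writing $\mathfrak h(\mathbbm 1_N) < \tfrac1N\log(N+2)$ instead of $\log(N+2)$---is exactly the normalization bookkeeping you flagged, and your self-correction (``keep $|\chi|\le 2N$ together with $\mathfrak h < \log(N+2)$ directly'') is the right fix; just delete the erroneous line rather than leaving the ``wait'' in the final write-up.
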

\begin{proof}
	We apply lemma \ref{lem:nor_ent_convex} with 
	\[
	u = \mathbbm{1}_{n + 4m + i} \quad \text{and} \quad v = (\underbrace{1, \cdots, 1}_{n \text{ times}},\underbrace{1,1,0,0,\cdots,1,1,0,0}_{m \text{ number of } (1,1,0,0)\text{'s}},\underbrace{0,\cdots,0}_{i \text{ times}})
	\]
	Note that $v$ is in the boundary of the standard simplex $\sigma$, so that $v\in \overline{\mathcal{C}}$ by Lemma \ref{prop:simplex}.
	Since $s^{(m,n)}_i = u+v$, the entropy of $s^{(m,n)}_i$ is strictly less than the entropy of $\mathbbm{1}_N$, which is in turn less than $\log{(N+2)}$.
	As $s^{(m,n)}_i$ represents a surface $S_{m+1, N+n}$, multiplying by $|\chi(S_{m+1, N+n})|$ gives the inequality
	\[
	    \overline{\mathfrak{h}}(s^{(m,n)}_i) < \dfrac{|\chi(S_{m+1, N+n})|}{N}\overline{\mathfrak{h}}(\mathbbm{1}_N) < 2N\log{(N+2)}
	\]
	The last inequality comes from $N=4m+n+i$, so that $\dfrac{|\chi(S_{m+1, N+n})|}{N}=\dfrac{6m+2n+i}{4m+n+i}<2$. Note that $\dfrac{|\chi(S_{m+1, N+n})|}{N}$ monotonely decreases and converges to $1$ as $i \to \infty$.
\end{proof}

Now we are ready to prove the upper bound for various $L(k, g, n)$.
\begin{thm}\label{thm:main_ineq}
	For $4g-4 \leq k+1 \leq n \leq 2k-4g+6$, we have
	\[
	L(k, g, n)\leq \frac{2(k+1)\log{(k+3)}}{|\chi(S_{g, n})|} 
	\]
\end{thm}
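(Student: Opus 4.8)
The plan is to prove the upper bound by exhibiting, for every triple $(k,g,n)$ in the stated range, a single pseudo-Anosov map $\phi$ on $S_{g,n}$ with $\kappa(\phi)=k$ whose entropy already satisfies the desired inequality; this $\phi$ will be the monodromy of a suitably chosen member of the $(m,\ell)$-sequences from Section \ref{sec:sequence}. Throughout I write $\ell$ for the parameter called $n$ in the definition of an $(m,n)$-sequence, so as to keep it distinct from the number of punctures $n$, and I assume $g\ge 2$ as in the main theorem.

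The first step is to solve for the right parameters. Recall that $s^{(m,\ell)}_j$ is a fiber of $M(N)$ with $N=\ell+4m+j$, that it represents $S_{m+1,\,N+\ell}$, and that (Section \ref{sec:nchainlink}) every fibration of $M(N)$ fixes a homology subspace of dimension $b_1(M(N))-1=N-1$. So I want $m+1=g$, $N+\ell=n$ and $N-1=k$, which forces
\[
m=g-1,\qquad \ell=n-k-1,\qquad j=2k-4g+6-n,\qquad N=k+1.
\]
The three inequalities $4g-4\le k+1$, $k+1\le n$ and $n\le 2k-4g+6$ are exactly what makes $m\ge1$, $\ell\ge0$ and $j\ge0$; moreover $N=k+1\ge 4g-4\ge4$, so $M(N)$ is hyperbolic by Neumann--Reid and the monodromy of each of its fibrations is genuinely pseudo-Anosov.

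Now I would set $\Sigma:=s^{(m,\ell)}_j$ and let $\phi$ be its monodromy. By the choice of parameters $\Sigma\cong S_{g,n}$ and $\kappa(\phi)=N-1=k$, so $\phi$ is one of the competitors in the infimum defining $L(k,g,n)$, whence $L(k,g,n)\le h(\phi)$. One point to handle is the boundary case $j=0$ (which occurs precisely when $n=2k-4g+6$): the $(m,\ell)$-sequences were indexed from $j=1$, but $s^{(m,\ell)}_0$ is still a consecutive Murasugi sum of Hopf bands, hence still a fiber of $M(N)$ by Theorem \ref{thm:murasugi_sum} and Lemma \ref{lem:hopf_band}, it still lies in the interior of the cone over the standard simplex, and the proofs of Lemma \ref{lem:gen_bdd_formula}, Proposition \ref{prop:simplex} and Lemma \ref{lem:nor_ent_bdd} go through for it unchanged.

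Finally I would bound $h(\phi)$ as in Lemma \ref{lem:nor_ent_bdd}: applied to $\Sigma$ with $N=k+1$ it gives $\overline{\mathfrak{h}}(\Sigma)<2(k+1)\log(k+3)$, and since $\overline{\mathfrak{h}}(\Sigma)=|\chi(S_{g,n})|\,h(\phi)$ this is exactly $h(\phi)<\frac{2(k+1)\log(k+3)}{|\chi(S_{g,n})|}$, hence the theorem. The estimate in that lemma is in turn just strict convexity of the entropy on the fibered cone (Lemma \ref{lem:nor_ent_convex}): writing $\Sigma=\mathbbm{1}_N+v$ with $v$ a $\{0,1\}$-vector having a zero coordinate, so that $v\in\partial\sigma\subset\overline{\mathcal{C}}$, one gets $h(\phi)=\mathfrak{h}(\Sigma)<\mathfrak{h}(\mathbbm{1}_N)=\log\lambda(\psi_N)<\log(N+2)$, which is then multiplied by $|\chi(S_{g,n})|=2m+N+\ell\le 2N$ (the inequality $2m+N+\ell\le 2N$ amounting to $n\le 2k-2g+4$, which follows from $n\le 2k-4g+6$). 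The only real difficulty is bookkeeping: checking that the three inequalities defining the admissible region are equivalent to $m\ge1$, $\ell\ge0$, $j\ge0$ and $N\ge4$, and checking that the degenerate cases $j=0$ and $\ell=0$ cause no trouble. All the genuine geometric content — that $C(n)$ is fibered, the shape of its fibered cone, the boundary and Euler-characteristic counts, and the convexity of the normalized entropy — is already in place from Sections \ref{sec:nchainlink} and \ref{sec:sequence}, so the theorem reduces to picking out the right member of those sequences.
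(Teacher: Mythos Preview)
Your proof is correct and takes exactly the same route as the paper: pick $\mathfrak{s}=s^{(g-1,\,n-k-1)}_{2k-4g+6-n}$ in $M(k+1)$, verify it represents $S_{g,n}$ with $\kappa=k$, and invoke Lemma \ref{lem:nor_ent_bdd}; you are in fact more careful than the paper about checking that the hypotheses translate into $m\ge1$, $\ell\ge0$, $j\ge0$, $N\ge4$. One tiny slip: the class $s^{(m,\ell)}_0$ is not itself a Murasugi sum of Hopf bands (that description applies to the fiber at $\mathbbm{1}_N$), but your alternative justification via Proposition \ref{prop:simplex} --- that it lies in the interior of the cone over the standard simplex --- is the correct one and already suffices.
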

\begin{proof}
	Given $(k,g,n)$, we choose the tuple $\mathfrak{s} := s^{(g-1, n-k-1)}_{2k - 4g + 6 - n}$.
	Note that the length of this tuple is $N := 4(g-1) + n-k-1 + 2k - 4g + 6 - n = k+1$ and it represents a fiber of topological type $S_{g, n}$.
	By Lemma \ref{lem:nor_ent_convex}, dividing the normalized entropy by $|\chi(S_{g,n})|$, we get an upper bound on entropy as follows,
	\[
	\mathfrak{h}(\mathfrak{s}) < \frac{2(k+1)\log{(k+3)}}{|\chi(S_{g,n})|}
	\]
	
	Finally, all surfaces we have considered are essentially embedded fibers of the mapping torus $M := S_{1,N}\times [0,1] / (x, 0)\sim (\psi(x),1)$. Hence, the first Betti number of $M$ determines the dimension of fixed homological subspace of the action of $\psi$ on $H_1$, which is then $N-1 = k$. 
	Therefore, we did indeed find an upper bound for $L(k, g, n)$ with conditions on $(k,g,n)$ as in the statement of the theorem.
\end{proof}
Restricting the scope of $(k,g,n)$ in Theorem \ref{thm:main_ineq}, we get a few direct corollaries.
If we take $k = n-1$ we have the following,
\begin{cor}
	For $n \geq 4g-4$, we have
	\[
	L(n-1, g, n)\leq \frac{2n\log{(n+2)}}{|\chi(S_{g, n})|} 
	\]
\end{cor}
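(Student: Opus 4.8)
The plan is to obtain this corollary as a direct specialization of Theorem \ref{thm:main_ineq}, setting $k = n-1$.

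First I would check that the chain of hypotheses $4g-4 \le k+1 \le n \le 2k-4g+6$ of Theorem \ref{thm:main_ineq} collapses to a single condition once $k = n-1$. With $k+1 = n$, the middle inequality $k+1 \le n$ is the tautology $n \le n$; the left inequality $4g-4 \le k+1$ reads $4g-4 \le n$; and the right inequality $n \le 2k-4g+6$ becomes $n \le 2(n-1)-4g+6 = 2n-4g+4$, which rearranges to $n \ge 4g-4$. Hence, under the substitution $k = n-1$, the full hypothesis of Theorem \ref{thm:main_ineq} is equivalent to the single inequality $n \ge 4g-4$ assumed in the corollary.

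Next I would substitute $k = n-1$ into the conclusion of Theorem \ref{thm:main_ineq}. Since $k+1 = n$ and $k+3 = n+2$, the upper bound $\dfrac{2(k+1)\log(k+3)}{|\chi(S_{g,n})|}$ is literally $\dfrac{2n\log(n+2)}{|\chi(S_{g,n})|}$, which is exactly the claimed bound on $L(n-1,g,n)$. Tracing through the proof of Theorem \ref{thm:main_ineq}, the witnessing fiber is $\mathfrak{s} = s^{(g-1,\,0)}_{\,n-4g+4}$ (note $n-4g+4 \ge 0$ by hypothesis): its length is $N = 4(g-1) + 0 + (n-4g+4) = n = k+1$, it represents a surface of type $S_{g,n}$, and its monodromy fixes a homological subspace of dimension $N-1 = n-1$, so it does compute an admissible value of $L(n-1,g,n)$.

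There is essentially no obstacle here, as this is a pure parameter specialization of an already-proven theorem; the only point deserving a moment of care is that the underlying $n$-chained link construction requires $N = k+1 = n \ge 4$, which holds automatically when $g \ge 2$ (and is in any case implicit in the statement through $n \ge 4g-4$). No new estimates or constructions are needed beyond Theorem \ref{thm:main_ineq}.
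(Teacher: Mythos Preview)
Your proposal is correct and matches the paper's own reasoning: the paper simply states the corollary as the specialization $k = n-1$ of Theorem~\ref{thm:main_ineq} with no further argument, and you have carefully verified that the hypotheses collapse to $n \ge 4g-4$ and the conclusion becomes the stated bound.
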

If instead we fix $g \geq 2$, we can see that $L(k,g,n)$ has, up to constant, an upper bound of the form $\dfrac{k\log{(n+2)}}{|\chi(S_{g,n})|}$.
\begin{cor}
	Let $g \geq 2$ fixed and $n \geq 4g-4$.
	For $\dfrac{n+4g-6}{2} \leq k < n$,
	\[
	    L(k,g,n) \leq \frac{2(k+1)\log{(n+2)}}{|\chi(S_{g,n})|}
	\]
\end{cor}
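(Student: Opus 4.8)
The plan is to deduce this corollary directly from Theorem \ref{thm:main_ineq} by checking that the stated hypotheses on $(k,g,n)$ place us inside the range $4g-4 \le k+1 \le n \le 2k-4g+6$ treated there, and then crudely replacing $\log(k+3)$ by $\log(n+2)$.

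First I would verify the three defining inequalities of the domain. Since $k$ and $n$ are integers (recall $k+1$ is the length $N$ of the tuple $s^{(m,n)}_i$ realizing the bound), the hypothesis $k<n$ gives $k+1\le n$ at once, and moreover the sharper statement $k\le n-1$. The hypothesis $k\ge (n+4g-6)/2$ is literally a rearrangement of $n\le 2k-4g+6$. Finally, to get $4g-4\le k+1$ I would combine $k\ge (n+4g-6)/2$ with $n\ge 4g-4$ to obtain $k\ge \bigl((4g-4)+4g-6\bigr)/2 = 4g-5$, hence $k+1\ge 4g-4$. So all the hypotheses of Theorem \ref{thm:main_ineq} are satisfied.

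Next I would invoke Theorem \ref{thm:main_ineq}, which yields
\[
L(k,g,n)\le \frac{2(k+1)\log(k+3)}{|\chi(S_{g,n})|}.
\]
Using $k\le n-1$ from the previous step, we have $k+3\le n+2$, hence $\log(k+3)\le \log(n+2)$, and substituting this into the displayed inequality gives exactly the claimed bound $L(k,g,n)\le 2(k+1)\log(n+2)/|\chi(S_{g,n})|$.

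There is essentially no obstacle here; it is a bookkeeping argument restricting the scope of the main theorem. The only subtlety worth flagging in the write-up is the integrality of $k$, which is what upgrades $k<n$ to $k\le n-1$ and thus to $\log(k+3)\le\log(n+2)$; without it one would only get the slightly weaker $\log(k+3)<\log(n+3)$, which differs from the stated form only in the additive constant inside the logarithm and would be absorbed anyway when passing to the asymptotic ($\lesssim$) statement in the introduction.
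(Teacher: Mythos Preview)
Your proposal is correct and follows the same approach as the paper: rewrite the hypotheses into the form required by Theorem~\ref{thm:main_ineq} and apply it. You are in fact more explicit than the paper, which simply asserts that the condition $k+1\le n\le 2k-4g+6$ reduces to $(n+4g-6)/2\le k<n$ and does not even mention the trivial replacement of $\log(k+3)$ by $\log(n+2)$.
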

\begin{proof}
	The condition $k+1 \leq n \leq 2k - 4g +6$ from Theorem \ref{thm:main_ineq} can be reduced to $\dfrac{n+4g-6}{2} \leq k < n$.
\end{proof}

We end this paper with a few questions we believe are worth thinking about.
Note that in \cite{tsai2009asymptotic}, Tsai proved that for any fixed $g \leq 2$, the minimal entropy $l_{g,n}$ of pA map on $S_{g,n}$ satisfies the following inequality,
\[
\frac{\log{n}}{c_g n} < l_{g,n} < \frac{c_g \log{n}}{n}
\]
for a constant $c_g$ only depends to $g$.
This can be restated as `$L(0,g,n)$ behaves like $\log{n}/n$ asymptotically'.
Our upper bound is not tight enough to describe the asymptotic behavior of $L(k,g,n)$. This is perhaps due to the absence of nicer lower bounds at the moment, or perhaps due to the existence of other sequences with lower entropy. 
It is thus natural to ask, as we did in Question \ref{que:main}, whether $L(k,g,n)$ behaves asymptotically like $\dfrac{(k+1)\log n}{|\chi(S_{g,n})|}$.

Note that in our formula we have a  $k\log{k}$ in the numerator, but the range of $k$ is restricted linearly by $n$ and that is why we believe the general formula should feature a $\log{n}$ and not $\log{k}$.

Another natural improvement would be to relax the conditions on $k, g$ and  $n$ in our theorem. 
Also, in the condition on the inequality, $k \geq 4g - 5$ implies $k \geq 5$ as we only consider surfaces of genera $g\geq 2$. 

We remark that in chapter \ref{sec:magic3mfld} we found some sequences that fix a homological subspace of dimension $2$, but we could not generalize the techniques used there for finding sequences fixing homological subspaces of arbitrary dimensions. That would also be a interesting directions to pursue.

Moreover, we only use the case $t = 1$ for the $(m,n)$-sequences we defined, but one can plug different values of $t$'s to get many other sequences.
But these sequences have gaps on the number of genus. For example, for $t = 2$, the genera covers by the sequences will be $g = 4, 7, 10,$ and so on.
Still, studying these sequences further might be useful to get bounds of $L(k,g,n)$ with a different scope on $k$, since enlarging $t$ does not change $k$ but increases $g$ only or both $g$ and $n$.
If we fix the length of tuple as $N = k+1$ and increasing $t$, then the upper bound of $L(k,g,n)$ is asymptotic to $1/{|\chi(S_{g,n})|}$.

\bibliographystyle{alpha} 
\bibliography{topology}
	
\end{document}